\newtheoremstyle{grassettoecorsivo}
	{}	
	{}	
	{\itshape}	
	{}	
	{\normalfont\bfseries}	
	{.}	
	{ }	
	{}	
\theoremstyle{grassettoecorsivo}
\newtheorem{thm}{Theorem}[section]
\newtheorem{lemma}[thm]{Lemma}
\newtheorem{prop}[thm]{Proposition}
\newtheorem{ex}[thm]{Example}
\newtheoremstyle{grassettoenormale}
	{}	
	{}	
	{\normalfont}	
	{}	
	{\normalfont\bfseries}	
	{.}	
	{ }	
	{}	
\theoremstyle{grassettoenormale}
\newenvironment{sistema}
{\left\lbrace\begin{array}{@{}l@{}}}
{\end{array}\right.}
\DeclarePairedDelimiter{\abs}{\lvert}{\rvert}
\DeclarePairedDelimiter{\norm}{\lVert}{\rVert}
\DeclarePairedDelimiter{\graf}{\lbrace}{\rbrace}
\newcommand{\ack}{\noindent \textbf{Acknowledgements.} \;}
\newcommand{\hypotheses}{\noindent \textsc{Hypotheses} \;}
\newcommand{\remark}{\noindent \textsc{Remark} \;}              
\newcommand{\remarks}{\noindent \textsc{Remarks} \;}              
\newcommand{\opn}[1]{\operatorname{#1}}                         
\newcommand{\crf}[1]{\hyperref[#1]{(\ref{#1};pg.\pageref{#1})}} 
\newcommand{\E}{\mathbb{E}}
\newcommand{\U}{\mathcal{U}}
\newcommand{\R}{\mathbb{R}}
\newcommand{\mP}{\mathbb{P}}
\newcommand{\F}{\mathcal{F}}
\newcommand{\mF}{\mathcal{F}}
\newcommand{\spazio}{\left( \Omega, \F, \mP \right)}
\title{A stochastic maximum principle with dissipativity conditions}
\author{\sc Carlo Orrieri\thanks{Email: carlo.orrieri01@ateneopv.it}\\[4pt]
Dipartimento di Matematica, Universit\`a di Pavia,\\ via Ferrata 1, 27100 Pavia, Italia}
\begin{document}

\maketitle
\begin{abstract}
\noindent In this paper we prove a version of the maximum principle, in the sense of Pontryagin, for the optimal control of a finite dimensional stochastic differential equation, driven by a multidimensional Wiener process. We drop the usual Lipschitz assumption on the drift term and substitute it with dissipativity conditions, allowing polymonial growth. The control enter both the drift and the diffusion term and takes values in a general metric space. 
\end{abstract}

\section{Introduction}
Stochastic maximum principle (SMP for brevity) is a standard tool in order to provide necessary conditions for optimal control problems. After the well known paper by Peng \cite{Peng} for finite dimensional systems, there have been a large number of works on this subject. Firstly, X.Y. Zhou simplified Peng's proof in \cite{Zhou} and studied the relationship between the SMP and dynamic programming. A detailed exposition of this work is contained in \cite{YZ}. Later, a generalization with random coefficients and without $L^p$-bounds on the control is formulated in \cite{CK} for linear equations by Cadenillas and Karatzas. Moreover, other directions have been followed. For example, in \cite{OS,Tang_li_1} the authors studied a version of the SMP for a class of noises with jumps and in \cite{BMO} the case of non-smooth coefficients of the state equation have also been treated. 
Regarding the infinite dimensional case there are still open issues. Indeed most of the results are only concerned with a convex control domain, or with the case in which the diffusion term does not depend on the control \cite{Bensoussan,Hu_Peng}. Recently some works are devoted to the study of a general infinite dimensional SMP. See e.g. \cite{Du_Meng_1,Du_Meng_2,FHT,Lu_Zhang,Tang_li_2}.
   
In this paper we are interested in formulating another version of the general SMP for the optimal control of a stochastic differential equation in a finite dimensional setting, driven by a multidimensional Wiener process. More precisely, we drop the lipschitzianity assumption on the drift term and we replace it with a more natural sign condition, known as dissipativity or monotonicity. This condition is widely studied in the literature both in finite and infinite dimension. In particular, Peng introduced it in \cite{Peng2}, in the study of backward stochastic differential equations (BSDEs in the following) with random terminal time. Then let us mention the contributions of Pardoux \cite{Pardoux}, Briand et al. \cite{Briand}, and more recently Briand and Confortola \cite{Briand_Conf}.
If the equation of the state is
\begin{equation*}
\begin{sistema}
dx(t) = b(t,x(t),u(t))dt + \sigma(t,x(t),u(t))dW(t) \\
x(0) = x_0
\end{sistema}
\end{equation*}
with cost functional of the form
\begin{equation*}
J(u(\cdot)) = \E \left[ \int_0^T f(t,x(t),u(t))dt + h(x(T))\right]
\end{equation*}
then the dissipativity of $b$ is expressed by the following
\begin{equation}\label{DISSIPATIVITY}
\braket{b(t,x,u) - b(t,x',u), x-x'} \leq \alpha\abs{x-x'}^2, \qquad \mP\text{-a.s. } \; t\in [0,T],\; u \in U;
\end{equation}
for every $x,x' \in \R^n$ and some constant $\alpha \in \R$. The key fact is that the dissipativity condition is inherited by both the first and second variation equation of the state. Indeed a condition of the following type holds $\mP$-a.s., for all $y$ in $\R^n$, for the drift term
\begin{equation}
\braket{D_xb(t,x,u)y,y} \leq \alpha\abs{y}^2, \qquad t \in [0,T], \; u\in U.
\end{equation}
Moreover, also the BSDE arising as first adjoint equation satisfies such a condition and this fact enables us to prove well posedness via an existence and uniqueness result provided by P. Briand et al. in \cite{Briand}. For the second adjoint equation, which is matrix valued, we get the same result equipping the space with the Hilbert-Schmidt norm. \\ 
Regarding the diffusion term of the state equation, we impose the usual Lipschitz hypothesis and we assume that both coefficients are regular in $x$. For the cost functional, we allow a polynomial growth for the coefficients. Finally, we stress the fact that, with these assumptions, we are able to treat polynomials of odd degree with strictly negative leading term as drift coefficients, instead of imposing only a linear growth. 

The paper is organized as follows. In Section 2 we fix notations and assumptions, we introduce the adjoint BSDEs and we state some preliminary results on the stochastic differential equation of the state. In Section 3 we give the statement of the main result. Section 4 is devoted to the spike variation technique and to the expansion of the cost. In Section 5 we conclude the proof of the SMP. Finally, Section 6 is devoted to the case for which the domain of the controls is convex and only one adjoint equation is needed. A sufficient condition of optimality is also exhibited. 

\section{Notations and Preliminaries}
Throughout this paper we let $W = \graf{W^1(t), \ldots,W^d(t)}_{t\geq 0}$ be a standard $d$-dimensional Brownian motion defined on some complete probability space $\spazio$. We denote by $(\mF_t)_{t\geq 0}$ the natural filtration associated to $W$, satisfying the usual conditions. We suppose that all the processes are defined for times $t \in [0,T]$. Then we denote by $\mathcal{P}$ the $\sigma$-algebra on $\Omega \times [0,T]$ generated by progressive processes. For any $p \geq 1$ we define 
\begin{itemize}
\item $L^p_{\mF}(0,T;\R^n)$: the set of all $\graf{\mF_t}_{t\geq 0}$-progressive processes $x(\cdot)$ such that \\$\E\int_0^T\abs{x(t)}^p dt <\infty$.
\item $C([0,T];L^p(\Omega;\R^n))$: the set of all $\graf{\mF_t}_{t\geq 0}$-progressive processes $x(\cdot)$ such that the map $t \mapsto x(t) \ni L^p(\Omega)$ is continuous and $\E \sup_{t \in [0,T]}\abs{x(t)}^p <\infty$.
\end{itemize}
The space of control actions is a general metric space $U$ (except in section 6), endowed with its Borel $\sigma$-algebra $\mathcal{B}(U)$. Furthermore, the class of admissible controls is defined by requiring that they are progressively measurable with respect to $\graf{\mF_t}_{t \geq 0}$, more precisely
\[ \U[0,T]:= \graf{u(\cdot): [0,T]\times \Omega \rightarrow U : u(\cdot) \text{ is } (\mF_t)_{t\geq 0}-\text{progressive}}. \]
Finally we will denote by $\abs{\cdot}$ the Euclidean norm in $\R^n$.\\ 

Here we want to study a finite horizon stochastic control problem in the form 
\begin{equation}\label{SDE}
\begin{sistema}
dx(t) = b(t,x(t),u(t))dt + \sigma(t,x(t),u(t))dW(t) \qquad t \in [0,T]\\
x(0) = x_0,
\end{sistema}
\end{equation}
with a cost functional given by
\begin{equation}\label{COST}
J(u(\cdot)) = \E \left[ \int_0^T f(t,x(t),u(t))dt + h(x(T))\right],
\end{equation}
If $x(\cdot)$ is a solution of \eqref{SDE} and $u(\cdot) \in \U[0,T]$ then we call $(x(\cdot),u(\cdot))$ an \textit{admissible pair}.
The control problem can be formulated as a minimization of the cost over $\U[0,T]$, more precisely a control $\bar{u}$ is \textit{optimal} if 
\begin{equation}\label{PROBLEMA}
J(\bar{u}) = \inf_{u(\cdot) \in \U[0,T]} J(u(\cdot)).
\end{equation}

\hypotheses 
\begin{itemize}
\item[(H1)] $(U,d)$ is a separable metric space;
\item[(H2)] The drift term $b:\Omega \times [0,T] \times \R^n \times U \rightarrow \R^n$ is $\mathcal{P}\otimes \mathcal{B}(\R^n) \otimes \mathcal{B}(U)$-measurable, where $\mathcal{P}$ is the progressive $\sigma$-algebra. The map $x \mapsto b(t,x,u)$ is $C^2(\R^n;\R^{n})$ and satisfies a 
\textit{$\alpha$-dissipativity} condition in the sense that there exists a constant $\alpha \in \R$ such that, $\mP$-a.s.
\begin{equation}\label{DISSIPATIVITY}
\braket{b(t,x,u) - b(t,x',u), x-x'} \leq \alpha\abs{x-x'}^2, \qquad u \in U, \; t\in [0,T],\; x,x' \in \R^n.
\end{equation}
\item[(H3)] The diffusion coefficient $\sigma: \Omega \times [0,T] \times \R^n \times U \rightarrow \R^{n\times d}$ is measurable with respect to $\mathcal{P}\otimes \mathcal{B}(\R^n) \otimes \mathcal{B}(U)$. Moreover the map $x \mapsto \sigma(t,x,u)$ is $C^2(\R^n;\R^{n\times d})$ and there exists a constant $C_1 >0$ such that, $\mP$-a.s.
\[ \abs{\sigma(t,x,u) - \sigma(t,x',u)} \leq C_1\abs{x-x'}, \qquad u \in U,\; t \in [0,T],\; x,x' \in \R^n.\]
\item[(H4)] (\textit{Polynomial Growth}) There exist $h \geq 0$, $C_2 > 0$ such that, for $j = 0,1,2$, $\mP$-a.s. 
\[\sup_{u \in U}\sup_{t \in [0,T]}\abs{D^{\beta}_xb(t,x,u)} \leq C_2(1 + \abs{x}^{h}),\qquad \abs{\beta} = j.\]
In addition we shall assume there exist $C_3 >0$, $C_4 >0$, $k \geq 0$ such that, $\mP$-a.s.
\[ \abs{\sigma(t,0,u)} \leq C_3, \qquad \qquad \qquad \qquad u \in U,\; t \in [0,T];\] 
\[ \sup_{u \in U}\sup_{0 \leq t\leq T} \abs{D^2_x\sigma(t,x,u)} \leq C_4(1+ \abs{x}^{k}), \qquad x \in \R^n.\]
\item[(H5)] $f:[0,T]\times \R^n \times U \rightarrow \R$ and $h: \R^n \rightarrow \R$ are measurable and the maps $x \mapsto f(t,x,u)$ and $x \mapsto h(x)$ are $C^2(\R^n;\R)$. Moreover there exists $C_5 > 0$, $m\geq 0$, $l \geq 0$ such that for $j = 0,1,2$ we have, $\mP$-a.s.
\[\sup_{u \in U}\sup_{t \in [0,T]} \abs{D^{\beta}_x f(t,x,u)} \leq C_5(1 + \abs{x}^{l}),\qquad \;\abs{\beta} = j,\]
\[\abs{D^{\beta}_x h(x)} \leq C_5(1 + \abs{x}^{m}),\qquad \qquad \qquad \qquad \abs{\beta} = j.\]
Moreover we suppose that for $\varphi = b,\sigma,f$, $u \mapsto \varphi(t,x,u)$ is continuous for every $x \in \R^n, t \in [0,T]$
\end{itemize}

\remarks 
\begin{enumerate}
\item Hypothesis (H3)-(H4) implies in particular that $\sigma(t,\cdot,u)$ has linear growth and there exists a constant $C_6$ independent of $\omega, t$ and $u$ such that $\abs{D_x b(t,x,u)} \leq C_6$ is bounded. Thanks to the polynomial growth of all the maps involved $\varphi = b,\sigma,f,D_xb,D_x\sigma,D_xf$, $D^2_xb,D^2_x\sigma,D^2_xf$ and hypothesis (H4) we have also that \[\abs{\varphi(t,0,u)} \leq L \] 
for some positive constant $L>0$.
\item The $\alpha$-dissipativity in (H2) can be rewritten, $\mP$-a.s.
\[ \braket{(b(t,x,u) - \alpha x) - (b(t,x',u)- \alpha x') , x-x'} \leq 0 \qquad t \in [0,T], \; u\in U, \; x,x' \in \R^n\]
which states that the function $b(t,x,u) - \alpha x$ is dissipative. 
\item It is easy to see that also the derivative of $b(t,x,u)$ with respect to $x$ satisfies a dissipativity condition. Indeed, for all $y \in \R^n$, $\mP$-a.s.
\begin{equation}\label{eq.diss.var}
\braket{D_xb(t,x,u)y,y} \leq \alpha\abs{y}^2 \qquad t \in [0,T], \; u\in U.
\end{equation}
This property is crucial in order to guarantee the well-posedness of the first and second variation of the state equation. 
\end{enumerate}

\noindent The following result is essential for the well posedness of the optimal control problem and it concerns the existence and uniqueness of a solution to the state equation, for any $u \in \U[0,T]$. Although it is known in the literature (cfr. e.g. \cite{DpIT}), we provide a sketch of the proof for completeness.

\begin{prop}\label{p.ex_apriori}
Under assumptions (H1)-(H5) the stochastic equation \eqref{SDE} admits a unique solution in $C([0,T];L^2(\Omega;\R^n))$, i.e. a progressive process $x(t)$ satisfying 
\[ \sup_{t \in[0,T]} \E\abs{x(t)}^2 <\infty.\]
Moreover, there exists a constant $C = C(T,p)$ dependent on $T$ and $p \geq1$, such that
\begin{equation}\label{Lp_estimate}
\sup_{t \in [0,T]}\E\abs{x(t)}^p \leq C(1+\abs{x_0}^p), \qquad \qquad p\geq 1.
\end{equation}
\end{prop}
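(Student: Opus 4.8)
The plan is to follow the classical scheme for stochastic equations with dissipative, polynomially growing drift and Lipschitz diffusion, so that the only genuine input beyond standard arguments is the sign condition \eqref{DISSIPATIVITY}.

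I would prove \emph{uniqueness} first. Given two solutions $x(\cdot),y(\cdot)$ in $C([0,T];L^2(\Omega;\R^n))$, I would apply It\^o's formula to $\abs{x(t)-y(t)}^2$, localize the resulting stochastic integral by stopping times and take expectations; estimating the drift contribution by \eqref{DISSIPATIVITY} and the quadratic-variation contribution by (H3) gives $\E\abs{x(t)-y(t)}^2\leq (2\alpha+C_1^2)\int_0^t\E\abs{x(s)-y(s)}^2\,ds$, and Gronwall's lemma forces $x\equiv y$.

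For \emph{existence} I would use a truncation. For $R>0$ let $p_R$ be the metric projection of $\R^n$ onto the closed ball $\bar B_R$ and set $b_R(t,x,u):=b(t,p_R(x),u)$; since $p_R$ is $1$-Lipschitz and $b(t,\cdot,u)$ has bounded gradient on $\bar B_R$ by (H4), the coefficient $b_R$ is globally Lipschitz in $x$, uniformly in $(\omega,t,u)$, so the classical theory for Lipschitz coefficients yields a unique solution $x_R\in C([0,T];L^p(\Omega;\R^n))$, $p\geq2$, of \eqref{SDE} with $b$ replaced by $b_R$. Setting $\tau_R:=\inf\{t\in[0,T]:\abs{x_R(t)}\geq R\}\wedge T$, uniqueness shows that $x_R$ and $x_{R'}$ agree on $[0,\tau_R\wedge\tau_{R'}]$ for $R\leq R'$ and that $x_R$ solves the original equation \eqref{SDE} on $[0,\tau_R]$ (there $p_R(x_R)=x_R$). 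Then I would derive the a priori bound: applying It\^o's formula to $\abs{x_R(t\wedge\tau_R)}^p$ and using $\braket{x,b(t,x,u)}\leq\alpha\abs{x}^2+L\abs{x}$ (from \eqref{DISSIPATIVITY} with $x'=0$ together with Remark~1), the linear growth of $\sigma$, the Burkholder--Davis--Gundy inequality for the martingale part and Young's inequality to absorb lower-order terms, one obtains a constant $C(T,p)$ independent of $R$ with $\E\sup_{s\leq t}\abs{x_R(s\wedge\tau_R)}^p\leq C(1+\abs{x_0}^p)+C\int_0^t\E\sup_{r\leq s}\abs{x_R(r\wedge\tau_R)}^p\,ds$, whence $\sup_{R}\E\sup_{t\leq T}\abs{x_R(t\wedge\tau_R)}^p\leq C(T,p)(1+\abs{x_0}^p)$ by Gronwall. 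Chebyshev's inequality then gives $\mP(\tau_R<T)\leq R^{-p}C(T,p)(1+\abs{x_0}^p)\to0$, so $\tau_R\uparrow T$ almost surely; pasting the $x_R$ produces a process $x(\cdot)$ solving \eqref{SDE} on $[0,T]$, Fatou's lemma upgrades the a priori bound to \eqref{Lp_estimate} (in particular $\sup_{t\leq T}\E\abs{x(t)}^2<\infty$), and continuity of $t\mapsto x(t)$ in $L^p(\Omega;\R^n)$ follows from the integral form of the equation and this estimate.

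The step I expect to be the crux is the \emph{uniform-in-$R$} a priori estimate: this is precisely where \eqref{DISSIPATIVITY} is used in an essential way, to prevent the polynomially growing drift from producing a finite-time explosion, so that the exit times $\tau_R$ exhaust $[0,T]$. The Lipschitz truncation by itself only yields solutions up to $\tau_R$, and without the dissipative sign there is no reason for $\tau_R\uparrow T$; the remaining steps are routine once this bound is in hand.
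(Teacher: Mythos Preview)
Your proof is correct but takes a genuinely different route from the paper. The paper argues by a fixed-point iteration in $C([0,T];L^2(\Omega;\R^n))$: for a fixed $\gamma$ in this space it solves $dx=b(t,x)\,dt+\sigma(t,\gamma)\,dW$ by rewriting it as a random ODE $\dot\eta=b(t,\eta+w^\gamma)$ with $w^\gamma(t)=\int_0^t\sigma(s,\gamma(s))\,dW(s)$, uses continuity plus the dissipativity to obtain a global pathwise solution, and then shows via It\^o's formula and \eqref{DISSIPATIVITY} that the solution map $\gamma\mapsto x$ is a contraction on a short interval, iterating to reach $T$; the $L^p$ estimate \eqref{Lp_estimate} is deduced afterwards from It\^o's formula. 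You instead truncate the drift, appeal to the classical Lipschitz theory for each $x_R$, and use the dissipative sign to produce an $R$-uniform a priori bound forcing $\tau_R\uparrow T$. Your approach has the advantage of delivering the stronger estimate $\E\sup_{t\leq T}\abs{x(t)}^p\leq C(1+\abs{x_0}^p)$ directly and of avoiding the pathwise ODE step; the paper's approach is closer in spirit to the contraction arguments used later for the variation equations and makes more transparent that only the Lipschitz constant of $\sigma$ enters the contraction estimate, the polynomial growth of $b$ being absorbed entirely by \eqref{DISSIPATIVITY}.
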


\begin{proof}
To simplify the notation we drop the dependence on the control; the case of controlled equation can be treated exactly in the same way.
By fixing $\gamma \in C([0,T];L^2(\Omega;\R^n))$ we want to show that the problem
\[ dx(t) = b(t,x(t))dt + \sigma(t,\gamma(t))dW(t), \qquad x(0) =x_0\]
admits a unique solution $J(\gamma)$ which belongs to $C([0,T];L^2(\Omega;\R^n))$. The existence part follows from the fact that the initial problem can easily reformulated as a differential equation with random coefficients of the form 
\begin{equation}
\dfrac{d}{dt}\eta(t) = b(t,\eta(t) + w^{\gamma}(t))
\end{equation}
where the quantity
\[ w^{\gamma}(t) := \int_0^t \sigma(s,\gamma(s))dW(s) \]
is well defined thanks to the linear growth imposed by the Lipschitz assumption. 
Since $b(\cdot)$ is continuous, we know that there is a local solution which can be easily extended to the whole $[0,T]$, by the dissipativity assumptions. Now we have to verify that the operator $J:C([0,T_0];L^2(\Omega;\R^n)) \rightarrow C([0,T_0];L^2(\Omega;\R^n))$ is a contraction if $T_0$ is small enough. Applying It\^{o}'s formula and taking expectation we get, for any $\gamma_1$, $\gamma_2 \in C([0,T];L^2(\Omega;\R^n))$
\begin{equation*}
\begin{split}
\E \abs{J_t(\gamma_1) - J_t(\gamma_2)}^2 &= 2\E \int_0^t \braket{ b(s,J_s(\gamma_1)) - b(s,J_s(\gamma_2)), J_s(\gamma_1) - J_s(\gamma_2)}ds \\
&+ \E \int_0^t \norm{\sigma(s,\gamma_1(s)) - \sigma(s,\gamma_2(s))}_2^2ds \\
&\leq 2\alpha \int_0^t \E \abs{J_s(\gamma_1) - J_s(\gamma_2)}^2ds + C_1^2 \int_0^t \E \abs{\gamma_1(s) - \gamma_2(s)}^2 ds,
\end{split}
\end{equation*} 
and so
\begin{equation}
\E \abs{J_t(\gamma_1) - J_t(\gamma_2)}^2 \leq C_1^2\int_0^t e^{2\alpha(t-s)}\E\abs{\gamma_1(s) - \gamma_2(s)}^2 ds;
\end{equation}
where we used assumptions on coefficients and the Gronwall lemma.
Eventually, 
\[ \sup_{t \in [0,T]}\E \abs{J_t(\gamma_1) - J_t(\gamma_2)}^2 \leq C_1^2e^{2\alpha T}T \sup_{t \in [0,T]}\E\abs{\gamma_1(t) - \gamma_2(t)}^2 \]
and if we choose $T_0$ such that $C_1\sqrt{T_0}e^{\alpha T_0} <1$ we prove that $J$ is a contraction. Proceeding in the same way on $[T_0,T_1],[T_1,T_2],\ldots$ we find a unique solution defined on the whole $[0,T]$. 
The estimate \eqref{Lp_estimate} of a generic momentum of the solution follows easily applying Ito's formula.
\end{proof}

\noindent Now we have to deal with the two backward stochastic differential equations arising as adjoint equations with terminal conditions in the formulation of the SMP. 
The first order adjoint equation has the following form 

\begin{equation}\label{eq.adjoint_first}
\begin{cases}
dp(t) = - \left[ D_xb(t,\bar{x}(t),\bar{u}(t))^T p(t)  + \sum_{j=1}^dD_x\sigma^j(t,\bar{x}(t),\bar{u}(t))^T q_j(t)-D_xf(t,\bar{x}(t),\bar{u}(t)) \right]dt\\
\qquad \qquad + \sum_{j=1}^d q_j(t) dW^j(t)\\
p(T) = -D_x h(\bar{x}(T));
\end{cases}
\end{equation}
where $p(\cdot)$ is the first order adjoint process and $\bar{x}(t)$, $\bar{u}(t)$ are the optimal trajectory and the optimal control process, respectively. It is even worth noting that the coefficient $D_xb(t,\bar{x}(t),\bar{u}(t))$ in front of $p(t)$ is dissipative, as we observed in Remark 3; this is the key fact in order to check the well posedness of the equation. Then, as it was pointed out in \cite{Peng}, the presence of the control in the diffusion term forces the introduction of a second variation process, which can be represented as the solution of a matrix valued BSDE of the form 

\begin{equation}\label{eq.adjoint_second}
\begin{cases}
dP(t) = - \left[ D_xb(t,\bar{x}(t),\bar{u}(t))^T P(t)  + P(t)D_xb(t,\bar{x}(t),\bar{u}(t))\right]dt \\
\qquad \qquad + \sum_{j=1}^d D_x\sigma^j(t,\bar{x}(t),\bar{u}(t))^T P_j(t)D_x\sigma^j(t,\bar{x}(t),\bar{u}(t))dt\\
\qquad \qquad + \sum_{j=1}^d \left( D_x\sigma^j(t,\bar{x}(t),\bar{u}(t))^T Q_j(t)+ Q_j(t)D_x\sigma^j(t,\bar{x}(t),\bar{u}(t)) \right)dt\\
\qquad \qquad + D_x^2 H(t,\bar{x}(t),\bar{u}(t),p(t),q(t))dt + \sum_{j=1}^d Q_j(t) dW^j(t)\\
p(T) = -D_x^2 h(\bar{x}(T));
\end{cases}
\end{equation}

where $H$ is the Hamiltonian and it is defined by

\begin{equation}\label{hamiltonian}
H(t,x,u,p,q) = \braket{p,b(t,x,u)} + \opn{Tr}[q^T\sigma(t,x,u)] - f(t,x,u).
\end{equation}
Also in this case we have a kind of monotonicity in the first term.
Now we observe that a solution of the first (second) adjoint BSDE is a pair of adapted processes $(p(\cdot),q(\cdot)) \in L^2_{\mF}(0,T;\R^n) \times (L^2_{\mF}(0,T;\R^n))^d$ (respectively, $ (P(\cdot),Q(\cdot)) \in L^2_{\mF}(0,T;S^n) \times (L^2_{\mF}(0,T;S^n))^d$), where $S^n$ is the space of symmetric matrices. Indeed, the following theorem hold
\begin{thm}\label{t.BSDE}
Under hypotheses (H1)-(H5) the adjoint equation \eqref{eq.adjoint_first} has a unique adapted solution $ (p(\cdot),q(\cdot)) \in L^2_{\mF}(0,T;\R^n) \times (L^2_{\mF}(0,T;\R^n))^d$.
\end{thm}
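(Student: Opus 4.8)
The plan is to recast the linear BSDE \eqref{eq.adjoint_first} in a form where one can invoke the existence and uniqueness result of Briand et al. \cite{Briand} for BSDEs with a monotone (dissipative) driver of polynomial growth. Write the equation as $dp(t) = -g(t,p(t),q(t))\,dt + \sum_{j=1}^d q_j(t)\,dW^j(t)$, with terminal value $\xi = -D_xh(\bar x(T))$ and driver
\[
g(t,p,q) = D_xb(t,\bar x(t),\bar u(t))^T p + \sum_{j=1}^d D_x\sigma^j(t,\bar x(t),\bar u(t))^T q_j - D_xf(t,\bar x(t),\bar u(t)).
\]
First I would record the structural properties of $g$ along the optimal pair. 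By Remark 1, $D_xb$ is bounded by a deterministic constant $C_6$, so the $p$-dependence is globally Lipschitz; by (H3)--(H4) the matrices $D_x\sigma^j$ are bounded (linear growth of $\sigma$ forces $D_x\sigma$ bounded, since $D_x^2\sigma$ has polynomial growth but $D_x\sigma$ itself is the gradient of a linearly growing map — more precisely one uses that $\sigma(t,\cdot,u)$ is Lipschitz with constant $C_1$, hence $|D_x\sigma^j|\le C_1$), so the $q$-dependence is also globally Lipschitz, uniformly in $(\omega,t)$. Thus $g$ is in fact Lipschitz in $(p,q)$, which is stronger than needed.

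Next I would verify the integrability of the data. The free term is $f_0(t) := -D_xf(t,\bar x(t),\bar u(t))$ and the terminal condition is $\xi = -D_xh(\bar x(T))$. By (H5), $|D_xf(t,x,u)|\le C_5(1+|x|^l)$ and $|D_xh(x)|\le C_5(1+|x|^m)$; combining with the moment bound \eqref{Lp_estimate} from Proposition \ref{p.ex_apriori} applied to $\bar x$, one gets $\E\int_0^T |f_0(t)|^2\,dt < \infty$ and $\E|\xi|^2 < \infty$ (indeed all $L^p$ moments are finite). Hence $(\xi,g)$ satisfies the hypotheses of the standard existence and uniqueness theorem for Lipschitz BSDEs, which yields a unique pair $(p(\cdot),q(\cdot)) \in L^2_{\mF}(0,T;\R^n)\times(L^2_{\mF}(0,T;\R^n))^d$. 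I would then note that the dissipativity $\braket{D_xb(t,\bar x,\bar u)y,y}\le \alpha|y|^2$ from \eqref{eq.diss.var} is what guarantees the a priori estimate is uniform in $T$ and, more importantly, is the ingredient that becomes genuinely necessary (rather than merely convenient) in the second adjoint equation \eqref{eq.adjoint_second}, whose driver has a quadratic term in $Q$ that is not Lipschitz; there the monotone-driver theorem of \cite{Briand} is used in its full strength, after equipping $S^n$ with the Hilbert–Schmidt norm and checking that the map $P\mapsto D_xb^TP + PD_xb$ is dissipative with constant $2\alpha$.

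The only mild subtlety — and the step I would flag as the main obstacle — is making sure the coefficients of the BSDE, being evaluated along $\bar x(\cdot)$ and $\bar u(\cdot)$, genuinely have the claimed integrability: this hinges on Proposition \ref{p.ex_apriori} delivering all polynomial moments of $\bar x$, so that the polynomial-growth bounds in (H4)--(H5) translate into $L^2$ (indeed $L^p$) integrability of the driver's free term and of the terminal datum. Once that is in place, the proof is a direct citation of the linear/Lipschitz BSDE theory; no fixed-point argument specific to this setting is required for \eqref{eq.adjoint_first}, the dissipativity entering only to keep constants independent of the horizon and to prepare the ground for the harder matrix-valued equation.
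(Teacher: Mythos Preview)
Your argument has a genuine gap: you rely on Remark~1's assertion that $|D_xb(t,x,u)|\le C_6$ to conclude the driver is globally Lipschitz in $p$, but that line in Remark~1 is almost certainly a typo (it should read $D_x\sigma$, which is bounded by the Lipschitz constant $C_1$ from (H3)). Hypothesis (H4) only gives $|D_xb(t,x,u)|\le C_2(1+|x|^h)$, and the paper's motivating Example~\ref{ex1} has $D_xb$ of genuine polynomial growth. Along $\bar x(\cdot)$, the coefficient $D_xb(t,\bar x(t),\bar u(t))$ therefore has all $L^p$ moments but is not bounded, so the driver is \emph{not} Lipschitz in $p$ with a deterministic constant, and the standard Pardoux--Peng theorem does not apply directly. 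What the paper actually does is exploit the dissipativity: from \eqref{eq.diss.var} one has $\langle D_xb(t,\bar x,\bar u)^T y,y\rangle=\langle D_xb(t,\bar x,\bar u) y,y\rangle\le\alpha|y|^2$, so the driver is monotone in $p$; together with Lipschitz dependence on $q$ (via $|D_x\sigma^j|\le C_1$), the integrability $\E\bigl[|D_xh(\bar x(T))|^2+(\int_0^T|D_xf(t,\bar x(t),\bar u(t))|\,dt)^2\bigr]<\infty$, and the local bound $\sup_{|p|\le r}|D_xb(t,\bar x(t),\bar u(t))^Tp|\in L^1_{\mathcal F}(0,T;\R^n)$, one is exactly in the setting of \cite[Theorem~4.1]{Briand}, which is the result cited. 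So the monotone-driver theorem is needed already for the \emph{first} adjoint equation, not merely ``convenient''.

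Two smaller corrections to your closing remarks: the second adjoint equation \eqref{eq.adjoint_second} is also linear in $(P,Q)$ --- there is no quadratic term in $Q$; the reason one again needs \cite{Briand} there is the same unboundedness of $D_xb$, and the extra work in that proof is to verify the dissipativity $\langle D_xb^TP+PD_xb,P\rangle_2\le 2\alpha\|P\|_2^2$ in the Hilbert--Schmidt inner product via the spectral decomposition of $PP^T$. Your integrability checks for $\xi$ and $f_0$ via (H5) and \eqref{Lp_estimate} are correct and match the paper.
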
  
\begin{proof}
Thanks to the growth assumptions in hypothesis (H5) and \eqref{Lp_estimate} we have that 
\[ \E\left[ \abs{D_x h(\bar{x}(T))}^2 + \left( \int_0^t \abs{D_xf(t,\bar{x}(t),\bar{u}(t))} \right)^2 \right] <\infty \]
and $\forall r >0$
\[ \sup_{\abs{p}\leq r} \abs{D_xb(t,\bar{x}(t),\bar{u}(t))^T p(t)} \in L^1_{\mathcal{F}}(0,T;\R^n)\]
Hence, using the result in \cite{Briand}, Theorem 4.1, page 119, the hypotheses of the theorem are satisfied by the BSDE \eqref{eq.adjoint_first} and we have finished. 
\end{proof}

\noindent Regarding the second adjoint equation we need to check that the drift term remains dissipative even though the BSDE is matrix valued. To do it we introduce the Hilbert-Schmidt norm and the corresponding scalar product in $S^n$ as
\[\braket{A,B}_2 = \opn{Tr}(AB^T) \qquad \forall A,B \in S^n.\]
Then we can state the following
\begin{thm}
Under hypotheses (H1)-(H5) the adjoint equation \eqref{eq.adjoint_second} has a unique adapted solution $(P(\cdot),Q(\cdot)) \in L^2_{\mF}(0,T;S^n) \times (L^2_{\mF}(0,T;S^n))^d$.
\end{thm}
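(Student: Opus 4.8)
\noindent The plan is to argue exactly as in the proof of Theorem \ref{t.BSDE}, the only genuinely new point being that the leading coefficient of \eqref{eq.adjoint_second} is dissipative with respect to the Hilbert--Schmidt structure on $S^n$. Regard \eqref{eq.adjoint_second} as a BSDE for the pair $(P(\cdot),Q(\cdot))$ with values in $S^n\times(S^n)^d$ equipped with the inner product $\braket{\cdot,\cdot}_2$, and denote by $\mathcal G(t,P,Q)$ its generator, so that \eqref{eq.adjoint_second} reads $dP(t)=-\mathcal G(t,P(t),Q(t))\,dt+\sum_{j=1}^d Q_j(t)\,dW^j(t)$ with terminal datum $-D_x^2 h(\bar x(T))$. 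Each term of $\mathcal G$ has one of the forms $M^T N+N M$, $M^T N M$ or the Hessian $D_x^2 H(t,\bar x,\bar u,p,q)$, with $M\in\{D_xb,D_x\sigma^j\}$ and $N\in\{P,Q_j\}$; hence $\mathcal G(t,\cdot,\cdot)$ is affine in $(P,Q)$ and maps $S^n\times(S^n)^d$ into $S^n$ (such products are symmetric whenever $N$ is symmetric, and a Hessian is symmetric), while the terminal datum also lies in $S^n$. It therefore suffices to solve the equation in the Hilbert space $S^n\cong\R^{n(n+1)/2}$ and then to check the hypotheses of \cite{Briand}, Theorem 4.1, there.

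\medskip

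\noindent The crucial verification is the monotonicity of $\mathcal G$ in $P$. Fix $(t,\omega)$, set $\Lambda:=D_xb(t,\bar x(t),\bar u(t))$ and take $A\in S^n$; using $A=A^T$ and the cyclicity of the trace,
\[ \braket{\Lambda^T A+A\Lambda,\,A}_2=\opn{Tr}\big(\Lambda^T A^2\big)+\opn{Tr}\big(\Lambda A^2\big)=\opn{Tr}\big((\Lambda+\Lambda^T)A^2\big). \]
Now \eqref{eq.diss.var} says precisely that $\Lambda+\Lambda^T\leq 2\alpha I$ as quadratic forms, and $A^2\geq 0$, so by monotonicity of the trace on positive matrices the right--hand side is $\leq 2\alpha\,\opn{Tr}(A^2)=2\alpha\norm{A}_2^2$. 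The other $P$--dependent terms of $\mathcal G$, namely $\sum_j D_x\sigma^j(t,\bar x,\bar u)^T P\,D_x\sigma^j(t,\bar x,\bar u)$, are linear in $P$ with coefficients bounded by $d\,C_1^2$ --- the Lipschitz assumption (H3) on $x\mapsto\sigma^j(t,x,u)$ forcing $\norm{D_x\sigma^j}\leq C_1$ uniformly in $(t,\omega,u)$ --- so they merely shift the constant, giving $\braket{\mathcal G(t,P,Q)-\mathcal G(t,P',Q),\,P-P'}_2\leq(2\alpha+d\,C_1^2)\norm{P-P'}_2^2$. For the same boundedness reason $\mathcal G(t,P,\cdot)$ is globally Lipschitz uniformly in $(t,\omega,P)$ and, using also $\abs{D_xb}\leq C_6$ from Remark 1, $\mathcal G$ has at most linear growth in $P$; this covers the structural assumptions of \cite{Briand}, Theorem 4.1.

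\medskip

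\noindent It remains to verify the integrability of the data. The terminal datum is square--integrable because (H5) gives $\norm{D_x^2 h(\bar x(T))}_2\leq c\,C_5(1+\abs{\bar x(T)}^m)$ and \eqref{Lp_estimate} controls all moments of $\bar x(T)$. For the free term, differentiating \eqref{hamiltonian} twice in $x$ shows that $D_x^2 H(t,\bar x,\bar u,p,q)$ is a sum of products of the components of $p$ (resp. $q$) with the second $x$--derivatives of $b$ (resp. $\sigma$), minus $D_x^2 f$, so by (H4)--(H5)
\[ \norm{D_x^2 H(t,\bar x(t),\bar u(t),p(t),q(t))}_2\leq c\Big(\abs{p(t)}\,(1+\abs{\bar x(t)}^h)+\abs{q(t)}\,(1+\abs{\bar x(t)}^k)+(1+\abs{\bar x(t)}^l)\Big). \]
To turn this into the integrability of the generator at $(0,0)$ demanded by \cite{Briand}, Theorem 4.1, I would first upgrade the first adjoint pair by applying the $L^p$--version of that same theorem to \eqref{eq.adjoint_first} --- whose data $D_x h(\bar x(T))$ and $\int_0^T\abs{D_x f(t,\bar x(t),\bar u(t))}\,dt$ lie in every $L^\beta(\Omega)$ by (H5) and \eqref{Lp_estimate} --- to get $\sup_{t}\E\abs{p(t)}^\beta+\E\big(\int_0^T\abs{q(t)}^2\,dt\big)^{\beta/2}<\infty$ for all $\beta\geq 2$, and then combine this with \eqref{Lp_estimate} through Cauchy--Schwarz and Hölder. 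Once all hypotheses of \cite{Briand}, Theorem 4.1, have been checked in $S^n$, it yields the unique adapted solution $(P(\cdot),Q(\cdot))\in L^2_{\mF}(0,T;S^n)\times\big(L^2_{\mF}(0,T;S^n)\big)^d$ of \eqref{eq.adjoint_second}.

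\medskip

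\noindent The substantive step is the identity $\braket{\Lambda^T A+A\Lambda,A}_2=\opn{Tr}((\Lambda+\Lambda^T)A^2)$ together with $\Lambda+\Lambda^T\leq 2\alpha I$ and $A^2\geq 0$, which transplants the dissipativity \eqref{eq.diss.var} to the matrix-valued equation; everything else is bookkeeping. The fussiest piece --- and the only place where \cite{Briand} is not a black box --- is the integrability of the free term $D_x^2 H$, because it couples the a priori $L^2$ processes $p,q$ with polynomially growing second derivatives of $b$ and $\sigma$, which is what forces the $L^p$--bootstrap on $(p,q)$ above.
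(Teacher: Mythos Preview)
Your proof is correct and follows the same overall strategy as the paper --- verify dissipativity of the drift in the Hilbert--Schmidt inner product and then invoke \cite{Briand}, Theorem 4.1 --- but the execution of the key step differs. The paper establishes $\braket{D_xb(t)P,P}_2\leq\alpha\norm{P}_2^2$ by spectrally decomposing the positive matrix $PP^T=\sum_i\gamma_i c_ic_i^T$ and applying \eqref{eq.diss.var} to each eigenvector $c_i$, whereas you compute $\braket{\Lambda^T A+A\Lambda,A}_2=\opn{Tr}\big((\Lambda+\Lambda^T)A^2\big)$ directly and use that $\Lambda+\Lambda^T\leq 2\alpha I$ together with $A^2\geq 0$. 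Both arguments are short and equivalent; yours is perhaps slightly cleaner in that it avoids any diagonalisation.

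More substantively, you go beyond the paper in tracking the integrability of the free term $D_x^2H(t,\bar x,\bar u,p,q)$: since this term couples $p,q$ with the polynomially growing second derivatives of $b$ and $\sigma$, square--integrability of $(p,q)$ alone is not enough, and you correctly observe that one should first upgrade $(p,q)$ to $L^\beta$ for all $\beta\geq 2$ via the $L^p$ theory in \cite{Briand} before applying H\"older. The paper's proof is silent on this point, so your version is in fact more complete.
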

\begin{proof}
Let us consider firstly the scalar product in the Hilbert-Schmidt norm 
\[ \braket{D_x b(t) P, P}_2 = \opn{Tr}(D_x b(t) PP^T) \]
where $P \in S^n$ and $D_xb(t)=D_xb(t,\bar{x}(t),\bar{u}(t))$.
Now decompose $PP^T$ in the following way
\[ PP^T = \sum_{i=1}^n \gamma_i c_i \cdot c_i^T \]
where $\gamma_i \geq 0$ and $c_i$ are the eigenvalues and the (orthonormal) eigenvectors of $PP^T$, respectively. Then we have
\begin{equation}\label{dissipativity_matrix}
\begin{split}
\braket{D_x b(t) P, P}_2 &= \sum_{i=1}^n \gamma_i \opn{Tr}(D_xb(t)c_i c_i^T) = \sum_{i=1}^n \gamma_i \opn{Tr}(c_i^T D_xb(t) c_i) \leq \alpha \sum_{i=1}^n \gamma_i \abs{c_i}_2^2\\
&= \alpha \sum_{i=1}^n \gamma_i = \alpha \opn{Tr}(PP^T) = \alpha\norm{P}^2_2.
\end{split}
\end{equation}
That is exactly the dissipativity condition we need. In fact, as in Theorem \ref{t.BSDE}, using again the result in \cite{Briand}, Theorem 4.1, and taking into account the dissipativity obtained in \eqref{dissipativity_matrix} we get the required result.
\end{proof}

\section{Statement of the Theorem}
Now we are in position to state the Pontryagin-type stochastic maximum principle for the optimal control problem \eqref{PROBLEMA} associated to the state equation \eqref{SDE}.

\begin{thm}\label{t.SMP}
(SMP)  Suppose (H1)-(H5) hold and let $(\bar{x},\bar{u})$ be an optimal pair for the control problem \eqref{PROBLEMA}. Then there exist two pairs of processes \[
\begin{cases}
(p(\cdot), q(\cdot)) \in L^2_{\mathcal{F}}([0,T],\R^n) \times (L^2_{\mathcal{F}}([0,T],\R^n))^d \\
(P(\cdot), Q(\cdot)) \in L^2_{\mathcal{F}}([0,T],S^n) \times (L^2_{\mathcal{F}}([0,T],S^n))^d
\end{cases}\]
that are solutions to the BSDEs \eqref{eq.adjoint_first} and \eqref{eq.adjoint_second} respectively, such that
\[ \mathcal{H}(t,\bar{x}(t), \bar{u}(t)) = \max_{u \in U}\mathcal{H}(t,\bar{x}(t), u) \qquad \qquad d\mP \times dt \; a.s.\]
where
\begin{equation}
\begin{split}
\mathcal{H}(t,x,u) &:= H(t,x,u,p(t),q(t)) - \dfrac{1}{2}\opn{Tr}\bigl( \sigma(t,\bar{x}(t),\bar{u}(t))^TP(t)\sigma(t,\bar{x}(t),\bar{u}(t)) \bigr)\\
&+ \dfrac{1}{2}\opn{Tr}\bigl[ (\sigma(t,x,u) - \sigma(t,\bar{x}(t),\bar{u}(t))^T)P(t)(\sigma(t,x,u) - \sigma(t,\bar{x}(t),\bar{u}(t)) \bigr]
\end{split}
\end{equation}
\end{thm}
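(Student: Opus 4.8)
The plan is to follow the classical spike (needle) variation argument of Peng, adapted to the present setting where the Lipschitz hypothesis on $b$ is weakened to $\alpha$-dissipativity. Fix the optimal pair $(\bar{x},\bar{u})$. For $\varepsilon \in (0,T)$ and $\tau \in [0,T)$ let $E_\varepsilon = [\tau,\tau+\varepsilon]$ and, for an arbitrary fixed $v \in U$ (to be taken $\mathcal{F}_\tau$-measurable, or a deterministic point of $U$ by a density argument), define the perturbed control $u^\varepsilon(t) = v$ on $E_\varepsilon$ and $u^\varepsilon(t)=\bar{u}(t)$ otherwise. Let $x^\varepsilon$ be the corresponding state. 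The first task is the variational expansion of $x^\varepsilon - \bar{x}$: introduce the first variation process $y^\varepsilon$ solving the linear SDE with drift $D_xb(t,\bar{x},\bar{u})y^\varepsilon$, diffusion $D_x\sigma^j(t,\bar{x},\bar{u})y^\varepsilon + [\sigma^j(t,\bar{x},v)-\sigma^j(t,\bar{x},\bar{u})]\mathbf{1}_{E_\varepsilon}$, and the second variation process $z^\varepsilon$ with the usual quadratic source terms and the spike term $[b(t,\bar{x},v)-b(t,\bar{x},\bar{u})]\mathbf{1}_{E_\varepsilon}$ in the drift. One then proves the moment estimates $\sup_t\E|y^\varepsilon(t)|^{2k}=O(\varepsilon^k)$, $\sup_t\E|z^\varepsilon(t)|^{2k}=O(\varepsilon^{2k})$, and $\sup_t\E|x^\varepsilon-\bar{x}-y^\varepsilon-z^\varepsilon|^2=o(\varepsilon^2)$.

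The crucial point — and the main obstacle — is that these estimates are the place where the dissipativity replaces Lipschitz continuity. In every energy estimate one applies It\^o's formula to $|y^\varepsilon(t)|^2$ (and to higher powers, and to the difference of the true and approximate states) and the term $2\braket{b(t,x_1,\bar u)-b(t,x_2,\bar u),x_1-x_2}$ is controlled not by a Lipschitz bound but by $2\alpha|x_1-x_2|^2$ via \eqref{DISSIPATIVITY}, respectively by $2\braket{D_xb(t,\bar x,\bar u)y,y}\le 2\alpha|y|^2$ via \eqref{eq.diss.var}; the resulting Gr\"onwall constants are $e^{2\alpha T}$-type and do not blow up. The polynomial growth in (H4)--(H5) together with the $L^p$-bounds \eqref{Lp_estimate} of Proposition \ref{p.ex_apriori} (and their analogues for $y^\varepsilon,z^\varepsilon$, which require taking sufficiently high moments so that all the $|x|^h$, $|x|^k$, $|x|^l$ factors are integrable) is exactly what makes the remainder terms $o(\varepsilon^2)$ rather than merely $O(\varepsilon)$. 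One must be careful that $D_xb$ is in fact bounded (Remark 1), so only the $D^2_xb$, $D^2_x\sigma$ and the cost derivatives carry genuine polynomial growth; this is where the odd-degree-polynomial drift is accommodated.

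Next comes the expansion of the cost. Writing $J(u^\varepsilon)-J(\bar u)$ and Taylor-expanding $f$ and $h$ around $\bar x$ to second order, then substituting $x^\varepsilon-\bar x = y^\varepsilon+z^\varepsilon+o(\varepsilon)$ and using the moment estimates above, one gets
\begin{equation*}
0 \le J(u^\varepsilon)-J(\bar u) = \E\!\int_0^T \!\Big[\text{(linear-in-}y^\varepsilon,z^\varepsilon\text{ terms)} + \tfrac12\braket{D^2_xf\, y^\varepsilon,y^\varepsilon}\Big]dt + \E\,\text{(terminal terms)} + o(\varepsilon).
\end{equation*}
Then one introduces the adjoint processes $(p,q)$ and $(P,Q)$ from Theorems \ref{t.BSDE} and the matrix-valued theorem, applies It\^o's formula to $\braket{p(t),y^\varepsilon(t)}$, to $\braket{p(t),z^\varepsilon(t)}$, and to $\braket{P(t)y^\varepsilon(t),y^\varepsilon(t)}$, and uses the terminal conditions $p(T)=-D_xh(\bar x(T))$, $P(T)=-D^2_xh(\bar x(T))$ together with the defining BSDE dynamics to cancel all the $y^\varepsilon$- and $z^\varepsilon$-dependent quantities. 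What survives, after letting $\varepsilon\to0$ and dividing by $\varepsilon$ (and using that $\tau$ is a Lebesgue point and $v$ arbitrary), is precisely the pointwise inequality
\begin{equation*}
\mathcal{H}(t,\bar x(t),v) \le \mathcal{H}(t,\bar x(t),\bar u(t)) \qquad d\mP\times dt\text{-a.s.}
\end{equation*}
for each fixed $v$; a separability argument over a countable dense subset of $U$ (using (H1) and the continuity of $b,\sigma,f$ in $u$ from (H5)) upgrades this to the simultaneous maximum over all $u\in U$, which is the claim. The bookkeeping in the It\^o-formula cancellations — in particular matching the $\mathrm{Tr}[(\sigma(t,x,v)-\sigma(t,\bar x,\bar u))^TP(\sigma(t,x,v)-\sigma(t,\bar x,\bar u))]$ term in $\mathcal H$ with the quadratic-variation contribution from the spike part of the diffusion of $y^\varepsilon$ — is routine but lengthy, and is deferred to Sections 4 and 5 as announced.
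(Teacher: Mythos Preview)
Your proposal is correct and follows essentially the same route as the paper: spike variation, first and second variation processes $y^\varepsilon,z^\varepsilon$, moment estimates via an energy/Gr\"onwall argument in which dissipativity \eqref{eq.diss.var} replaces the Lipschitz bound, Taylor expansion of the cost, and then It\^o-duality with $(p,q)$ and $(P,Q)$ to cancel the $y^\varepsilon,z^\varepsilon$-dependent terms. The only cosmetic differences are that the paper packages the energy estimate as a standalone Lemma~\ref{l.eq.lin} for general linear SDEs with a dissipative coefficient $A(t)$, allows $E_\varepsilon$ to be an arbitrary set of measure $\varepsilon$ rather than an interval $[\tau,\tau+\varepsilon]$, and handles the second-order duality via the matrix process $Y^\varepsilon(t)=y^\varepsilon(t)y^\varepsilon(t)^T$ instead of $\langle P(t)y^\varepsilon(t),y^\varepsilon(t)\rangle$ directly; these are equivalent formulations.
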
 
\begin{ex}\label{ex1} 
{\rm Here we want to stress that, under our assumption, we can consider a state equation which has a drift coefficient of the following type 
\begin{equation}
b_i(t,x) = -c_i(u) x(t)^{2m+1} + \sum_{j=1}^{2m} c_{ij}(u)x(t)^j, \qquad \quad 1\leq i\leq n,
\end{equation} 
where  $0 < \lambda \leq c_i(\cdot) \leq C$ and $\abs{c_{ij}(\cdot)} \leq C$.
i.e. polynomials of degree $2m+1$ with strictly negative leading term. This is a genuine generalization of the classical Lipschitz case in which only a linear growth is allowed.}
\end{ex}
\section{Spike Variation Technique}
In this section we are going to study a Taylor expansion of the state trajectory with respect to a needle perturbation of the control. Let $E_{\varepsilon} \subset [0,T]$ be a set of measure $\varepsilon$ and $\bar{u}$ an optimal control, then we define the perturbed control as
\[
u^{\varepsilon}(t)=
\begin{cases}
\bar{u}(t), & \quad \text{if } t \in [0,T]\setminus E_{\varepsilon} \\
w, & \quad \text{if } t \in E_{\varepsilon},
\end{cases}
\]
\remark In general $U$ does not have a linear structure, hence a perturbation like $\bar{u}(t) + \varepsilon u(t)$ is meaningless unless $U$ is, for example, a convex space. We will discuss this case later.\\
\newline
If $(\bar{x}(\cdot),\bar{u}(\cdot))$ is a given optimal pair, let $(x^{\varepsilon}(\cdot), u^{\varepsilon}(\cdot))$ satisfy the following
\begin{equation}\label{SDE_perturbed}
\begin{sistema}
dx^{\varepsilon}(t) = b(t,x^{\varepsilon}(t),u^{\varepsilon}(t))dt + \sigma(t,x^{\varepsilon}(t),u^{\varepsilon}(t))dW(t) \\
x^{\varepsilon}(0) = x_0.
\end{sistema}
\end{equation}
Following the notation of Yong and Zhou \cite{YZ}, we will denote by $\delta\varphi(t)$ the quantity $\varphi(t,\bar{x}(t),$ $u^{\varepsilon}(t)) - \varphi(t,\bar{x}(t),\bar{u}(t))$, for a generic function $\varphi$, and by $y^{\varepsilon}(\cdot), z^{\varepsilon}(\cdot)$ the solutions of the following SDEs
\begin{equation}\label{SDE_FirstVariation}
\begin{sistema}
dy^{\varepsilon}(t) = D_xb(t)y^{\varepsilon}(t)dt + \sum_{j=1}^{d}\bigl[ D_x\sigma^j(t)y^{\varepsilon}(t) + \delta\sigma^j\chi_{E_{\varepsilon}}(t)\bigr]dW^j(t) \\
y^{\varepsilon}(0) = 0,
\end{sistema}
\end{equation}
and
\begin{equation}\label{SDE_SecondVariation}
\begin{sistema}
dz^{\varepsilon}(t) = \bigl[ D_xb(t)z^{\varepsilon}(t) + \delta b(t)\chi_{E_{\varepsilon}}(t) + \dfrac{1}{2}D^2_x b(t)y^{\varepsilon}(t)^2 \bigr]dt\\
\qquad + \sum_{j=1}^{d}\bigl[ D_x\sigma^j(t)z^{\varepsilon}(t) + \delta D_x\sigma^j(t)y^{\varepsilon}(t)\chi_{E_{\varepsilon}}(t) + \dfrac{1}{2}D^2_x \sigma^j(t)y^{\varepsilon}(t)^2\bigr]dW^j(t) \\
z^{\varepsilon}(0) = 0,
\end{sistema}
\end{equation}
where 
\[D_xb(t):= D_xb(t,\bar{x}(t),\bar{u}(t)), \qquad D_x\sigma^j(t):= D_x\sigma^j(t,\bar{x}(t),\bar{u}(t)),\]
have values in $\R^{n\times n}$ for $1 \leq j \leq d$ and also
$$
{D^2_x b(t)y^{\varepsilon}(t)^2} := \left(
\begin{array}{ccc}
\opn{Tr}\bigl[ D^2_x b^1(t)y^{\varepsilon}(t)y^{\varepsilon}(t)^T \bigr] \\
\vdots \\
\opn{Tr}\bigl[ D^2_x b^n(t)y^{\varepsilon}(t)y^{\varepsilon}(t)^T \bigr]
\end{array}
\right),
$$

$$
{D^2_x \sigma^j(t)y^{\varepsilon}(t)^2} := \left(
\begin{array}{ccc}
\opn{Tr}\bigl[ D^2_x \sigma^{1j}(t)y^{\varepsilon}(t)y^{\varepsilon}(t)^T \bigr] \\
\vdots \\
\opn{Tr}\bigl[ D^2_x \sigma^{nj}(t)y^{\varepsilon}(t)y^{\varepsilon}(t)^T \bigr]
\end{array}
\right).
$$
Here we want to obtain an a priori estimate for a general linear SDE with stochastic coefficients in the spirit of lemma 4.2 of \cite{YZ}, which will be useful in the sequel.

\begin{lemma}\label{l.eq.lin}
Let $Y(t) \in L_{\mathcal{F}}^{2}(0,T;\R^n)$ be a solution of the following linear SDE
\[\begin{cases}
dY(t) = \left( A(t)Y(t) + \alpha(t) \right)dt + \sum_{j=1}^d\left( B^j(t)Y(t) + \beta^j(t) \right)dW^j(t)\\
Y(0) = Y_0;
\end{cases}\]
where $A,B^j: [0,T]\times \Omega \rightarrow \R^{n\times d}$ and $\alpha,\beta^{j}:[0,T]\times \Omega \rightarrow \R^{n}$ are $(\F_t)$-progressive. Moreover, suppose that the following conditions hold:
there exist $c \in \R$, $L \geq 0$, $k\geq 1$ such that
\begin{enumerate}
\item $\braket{A(t)Y(t),Y(t)} \leq  c\abs{Y(t)}^2 \;\qquad \qquad \qquad \qquad \qquad d\mP\times dt-\text{a.s.}$;
\item $\abs{B^j(t)} \leq L, \;\;\qquad \qquad \qquad \qquad \qquad \qquad \qquad \qquad d\mP\times dt-\text{a.s.},\, 1\leq j\leq d$;
\item $\int_0^T \graf{\E \abs{\alpha(s)}^{2k}}^{\frac{1}{2k}}ds + \int_0^T \graf{\E \abs{\beta(s)}^{2k}}^{\frac{1}{k}}ds < \infty$,$ \qquad $ $1\leq j\leq d$, for some $k\geq 1$.
\end{enumerate}
Then the following a priori estimate holds
\[ \sup_{0 \leq t \leq T} \E\abs{Y(t)}^{2k} \leq K\left[\E\abs{Y_0}^{2k} + \left( \int_0^T \graf{\E\abs{\alpha(s)}^{2k}}^{\frac{1}{2k}}\right)^{2k} + \sum_{j=1}^d \left( \int_0^T \graf{\E\abs{\beta^j(s)}^{2k}}^{\frac{1}{2k}}ds \right)^k \right]. \]
\end{lemma}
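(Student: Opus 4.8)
The plan is to apply It\^o's formula to $t\mapsto\abs{Y(t)}^{2k}$, reduce to a scalar integral inequality for $m(t):=\E\abs{Y(t)}^{2k}$, and then close it by a Gronwall-type argument. First I would set $V(t):=\abs{Y(t)}^2$ and compute, by It\^o's formula,
\[ dV(t)=\Bigl[2\braket{Y(t),A(t)Y(t)+\alpha(t)}+\sum_{j=1}^d\abs{B^j(t)Y(t)+\beta^j(t)}^2\Bigr]dt+2\sum_{j=1}^d\braket{Y(t),B^j(t)Y(t)+\beta^j(t)}\,dW^j(t); \]
applying It\^o once more to $V\mapsto V^k$ (after the routine regularization $V\mapsto(V+\varepsilon)^k$, with $\varepsilon\downarrow0$ at the end, if $k$ is not an integer) then yields an expression for $d\abs{Y(t)}^{2k}$. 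Since $Y$ is only assumed to lie in $L^2_{\mathcal{F}}(0,T;\R^n)$, I would carry out the argument for the stopped process $Y(\cdot\wedge\tau_R)$, $\tau_R:=\inf\{t:\abs{Y(t)}\geq R\}$, so that every expectation below is finite and the It\^o integral is a genuine martingale; the resulting bound will be uniform in $R$, and one passes to the limit $R\to\infty$ by Fatou's lemma.

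Taking expectations removes the martingale term, and then the first assumption $\braket{A(t)Y,Y}\leq c\abs{Y}^2$, the second assumption $\abs{B^j}\leq L$, together with $\abs{B^jY+\beta^j}^2\leq2L^2\abs{Y}^2+2\abs{\beta^j}^2$ and $\braket{Y,B^jY+\beta^j}^2\leq2L^2\abs{Y}^4+2\abs{Y}^2\abs{\beta^j}^2$, lead to an inequality of the form
\[ m(t)\leq m(0)+C\int_0^t m(s)\,ds+C\int_0^t\E\bigl[\abs{Y(s)}^{2k-1}\abs{\alpha(s)}\bigr]\,ds+C\sum_{j=1}^d\int_0^t\E\bigl[\abs{Y(s)}^{2k-2}\abs{\beta^j(s)}^2\bigr]\,ds, \]
with $C=C(c,L,d,k)$. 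The two mixed terms are then estimated by H\"older's inequality in $\omega$: writing $a(s):=\graf{\E\abs{\alpha(s)}^{2k}}^{1/(2k)}$ and $b_j(s):=\graf{\E\abs{\beta^j(s)}^{2k}}^{1/(2k)}$, one gets $\E\bigl[\abs{Y(s)}^{2k-1}\abs{\alpha(s)}\bigr]\leq m(s)^{(2k-1)/(2k)}a(s)$ and $\E\bigl[\abs{Y(s)}^{2k-2}\abs{\beta^j(s)}^2\bigr]\leq m(s)^{(k-1)/k}\,b_j(s)^2$.

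To close the estimate I would set $N(t):=\sup_{0\leq s\leq t}m(s)$ (finite for the stopped process), bound $m(s)$ by $N(s)$ in the linear term and by $N(t)$ in the two nonlinear ones, and thereby obtain
\[ N(t)\leq m(0)+C\int_0^t N(s)\,ds+C\,N(t)^{(2k-1)/(2k)}\int_0^T a(s)\,ds+C\,N(t)^{(k-1)/k}\sum_{j=1}^d\int_0^T b_j(s)^2\,ds. \]
Young's inequality with exponents $\bigl(\tfrac{2k}{2k-1},2k\bigr)$ on the first nonlinear term and $\bigl(\tfrac{k}{k-1},k\bigr)$ on the second (the case $k=1$ being immediate) absorbs them into $\tfrac12 N(t)$ plus a constant multiple of $\bigl(\int_0^T a(s)\,ds\bigr)^{2k}+\sum_{j=1}^d\bigl(\int_0^T b_j(s)^2\,ds\bigr)^{k}$; these last two data terms are exactly the quantities kept finite by the third assumption, since $\int_0^T b_j(s)^2\,ds=\int_0^T\graf{\E\abs{\beta^j(s)}^{2k}}^{1/k}\,ds$. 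One is then left with $N(t)\leq 2m(0)+2C\int_0^t N(s)\,ds+(\text{data terms})$, and Gronwall's lemma yields $\sup_{0\leq t\leq T}\E\abs{Y(t)}^{2k}=N(T)\leq K\bigl[\E\abs{Y_0}^{2k}+\bigl(\int_0^T a(s)\,ds\bigr)^{2k}+\sum_{j=1}^d\bigl(\int_0^T b_j(s)^2\,ds\bigr)^{k}\bigr]$ with $K=K(c,L,d,k,T)$, which is the claim.

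The two It\^o expansions and the tracking of constants are routine; I expect the genuinely delicate points to be the localization that makes $\sup_t\E\abs{Y(t)}^{2k}$ a priori finite (needed because the hypothesis only provides $Y\in L^2_{\mathcal{F}}$), and the combination of H\"older-in-$\omega$ with the Young-type absorption, which is precisely what allows the estimate to be phrased with the strong data norms $\int_0^T\graf{\E\abs{\cdot}^{2k}}^{1/(2k)}\,ds$ rather than the weaker $\int_0^T\E\abs{\cdot}^{2k}\,ds$.
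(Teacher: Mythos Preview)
Your proposal is correct and follows essentially the same route as the paper: It\^o's formula for $\abs{Y}^{2k}$, the dissipativity and boundedness hypotheses to control the drift and diffusion contributions, then H\"older/Young and Gronwall. The only organizational differences are that the paper first assumes $\alpha,\beta$ bounded and approximates (where you localize via stopping times), and that the paper derives the cruder bound with $\E\int_0^T\bigl[\abs{\alpha(s)}^{2k}+\abs{\beta(s)}^{2k}\bigr]\,ds$ via Young-then-Gronwall and only afterwards upgrades to the sharper norms by setting $\varphi(t)=\bigl(\sup_{s\leq t}\E\abs{Y(s)}^{2k}\bigr)^{1/2k}$ and following \cite{YZ}, whereas you apply H\"older in $\omega$ before Gronwall and close the estimate in a single pass.
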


\begin{proof}
We exhibit the proof for $\alpha$ and $\beta$ being bounded; the general case follows using the usual approximation argument. We begin by computing It\^{o}'s formula for $f(y) = \abs{y}^p$, with $p \geq 4$, and we set $p = 2k$. The case with $p\in [1,4)$ follows from the H\"{o}lder inequality. 
\begin{equation*}
\begin{split}
\E\abs{Y(t)}^{2k} &\leq \E\abs{Y_0}^{2k} + 2k\E \int_0^t \abs{Y(s)}^{2k-2}\braket{Y(s),A(s)Y(s) + \alpha(s)}ds\\
&+ 2k(k-1)\sum_{j=1}^d\E \int_0^t \abs{Y(s)}^{2k-4}\braket{Y(s),B_j(s)Y(s) + \beta_j(s)}^2 ds \\
&+ \sum_{j=1}^d k\E\int_0^t\abs{Y(s)}^{2k-2}\braket{B_j(s)Y(s) + \beta_j(s),B_j(s)Y(s) + \beta_j(s)} ds \\
&\leq \E\abs{Y_0}^{2k} + K \E\int_0^t \left[ \abs{Y(s)}^{2k} \right.\\
&+ \left. \abs{Y(s)}^{2k-1}(\abs{\alpha(s)} + \abs{\beta(s)}) + \abs{Y(s)}^{2k-2}\abs{\beta(s)}^2 \right]ds.
\end{split}
\end{equation*}
Then using Young inequality twice, we obtain
\[ \E\abs{Y(t)}^{2k} \leq \E\abs{Y_0}^{2k} + K\E\int_0^t\left[ \abs{Y(s)}^{2k} + \abs{\alpha(s)}^{2k} + \abs{\beta(s)}^{2k} \right]ds \]
Hence, from Gronwall inequality we get
\[ \E\abs{Y(t)}^{2k} \leq K\left( \E\abs{Y_0}^{2k} + \E\int_0^T \left[ \abs{\alpha(s)}^{2k} + \abs{\beta(s)}^{2k} \right]ds \right). \] 
Now, in order to obtain the required estimate under hypothesis (3) we set
\[ \varphi(t) = \left( \sup_{0\leq s\leq t} \E\abs{Y(t)}^{2k}\right)^{1/2k} \]
and following \cite{YZ}, we end up with
\[\varphi(T)^{2k} \leq K \biggr\{\varphi(0)^{2k} + \left[ \int_0^T (\E\abs{\alpha(s)}^{2k})^{1/2k}ds \right]^{2k} + \left[\int_0^T (\E\abs{\beta(s)}^{2k})^{1/k}ds \right]^{k}\biggr\}.
\]
which is the required result.
\end{proof}

Concerning the well posedeness of the stochastic differential equations \eqref{SDE_FirstVariation} and \eqref{SDE_SecondVariation}, the key fact is that on the drift term of both the equations we have a dissipativity condition like \eqref{eq.diss.var}. This enables us to state the following 

\begin{prop}
Under the hypotheses (H1)-(H5) the stochastic differential equations \eqref{SDE_FirstVariation} and \eqref{SDE_SecondVariation} admit a unique solution $y^{\varepsilon}, z^{\varepsilon} \in C([0,T];L^2(\Omega;\R^n))$.
\end{prop}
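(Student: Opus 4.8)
The plan is to read both \eqref{SDE_FirstVariation} and \eqref{SDE_SecondVariation} as \emph{linear} stochastic differential equations in the unknown with progressively measurable random coefficients, of the common form $dY=(D_xb(t)Y+\alpha(t))\,dt+\sum_{j=1}^d(D_x\sigma^j(t)Y+\beta^j(t))\,dW^j$, and to combine the classical well posedness theory with Lemma \ref{l.eq.lin}. By Remark 1 the matrix $D_xb(t)=D_xb(t,\bar x(t),\bar u(t))$ is bounded by $C_6$, and by \eqref{eq.diss.var} it satisfies $\braket{D_xb(t)Y,Y}\le\alpha\abs{Y}^2$; by (H3) each $D_x\sigma^j(t)$ is bounded by $C_1$. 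Hence drift and diffusion are globally Lipschitz in $Y$ with a deterministic constant, so existence and uniqueness follow either from the standard theory of linear SDEs with progressive coefficients or, for a self-contained argument, by repeating verbatim the Banach fixed point scheme of Proposition \ref{p.ex_apriori} — the $\alpha$-dissipativity of $D_xb(t)$ being exactly what keeps the Gronwall constant under control there, just as for the state equation.

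First I would treat \eqref{SDE_FirstVariation}, where $\alpha(t)\equiv 0$ and $\beta^j(t)=\delta\sigma^j\chi_{E_{\varepsilon}}(t)$. Since $\sigma$ has linear growth in $x$ (Remark 1) and $\bar x(\cdot)$ has finite moments of every order by \eqref{Lp_estimate}, the forcing $\beta^j$ lies in $L^{2k}_{\mF}(0,T;\R^n)$ for all $k\ge 1$; thus hypotheses (1)--(3) of Lemma \ref{l.eq.lin} hold with $A=D_xb$, $B^j=D_x\sigma^j$, $c=\alpha$, and we get $\sup_t\E\abs{y^{\varepsilon}(t)}^{2k}<\infty$ for every $k$. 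A short Burkholder--Davis--Gundy plus Gronwall estimate on the integral form of the equation then upgrades this to $\E\sup_{t\in[0,T]}\abs{y^{\varepsilon}(t)}^2<\infty$ and to the continuity of $t\mapsto y^{\varepsilon}(t)$ in $L^2(\Omega)$, i.e. $y^{\varepsilon}\in C([0,T];L^2(\Omega;\R^n))$.

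Next I would turn to \eqref{SDE_SecondVariation}, which has the same linear part but forcing terms $\alpha(t)=\delta b(t)\chi_{E_{\varepsilon}}(t)+\tfrac{1}{2}D^2_xb(t)y^{\varepsilon}(t)^2$ and $\beta^j(t)=\delta D_x\sigma^j(t)y^{\varepsilon}(t)\chi_{E_{\varepsilon}}(t)+\tfrac{1}{2}D^2_x\sigma^j(t)y^{\varepsilon}(t)^2$. The only point needing care is to check these satisfy condition (3) of Lemma \ref{l.eq.lin}: $\delta b(t)$ and $\delta D_x\sigma^j(t)$ are dominated, via the polynomial-growth bounds (H4) evaluated along $\bar x(\cdot)$ together with \eqref{Lp_estimate}, by functions with finite moments of every order; for the quadratic terms one combines (H4) (polynomial growth of $D^2_xb$ and $D^2_x\sigma^j$ of orders $h$ and $k$ respectively) with the moment bounds on $\bar x(\cdot)$ from \eqref{Lp_estimate}, those on $y^{\varepsilon}(\cdot)$ just obtained, and a H\"older inequality, to get for instance $\E\abs{D^2_xb(t)y^{\varepsilon}(t)^2}^{2k}\le\bigl(\E(1+\abs{\bar x(t)}^h)^{4k}\bigr)^{1/2}\bigl(\E\abs{y^{\varepsilon}(t)}^{8k}\bigr)^{1/2}<\infty$ uniformly in $t$, and similarly for $D^2_x\sigma^j$. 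With (3) verified, Lemma \ref{l.eq.lin} gives $\sup_t\E\abs{z^{\varepsilon}(t)}^{2k}<\infty$, and the same BDG step as before yields $z^{\varepsilon}\in C([0,T];L^2(\Omega;\R^n))$; existence and uniqueness are again the linear theory with these admissible forcing terms.

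I expect the main obstacle to be the bookkeeping of the last step: one must solve the two equations in the right order, since the forcing of \eqref{SDE_SecondVariation} depends on $y^{\varepsilon}$, and one must track how the growth exponents in (H4) interact with the available moments of $\bar x$ and $y^{\varepsilon}$ — everything goes through precisely because \eqref{Lp_estimate} and Lemma \ref{l.eq.lin} supply moments of \emph{all} orders, so no sharp exponent matching is ever needed. A minor secondary point is that Lemma \ref{l.eq.lin} as stated only produces $\sup_t\E\abs{\cdot}^{2k}$, whereas membership in $C([0,T];L^2(\Omega;\R^n))$ requires $\E\sup_t\abs{\cdot}^2$ and continuity in $L^2$; this gap is closed by the extra Burkholder--Davis--Gundy and Gronwall argument on the integral form of each equation mentioned above.
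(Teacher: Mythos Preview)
Your proposal is correct and follows essentially the same route as the paper: both recognize the two equations as linear SDEs with dissipative drift $D_xb(t)$ and bounded diffusion matrices $D_x\sigma^j(t)$, both invoke the contraction/fixed-point scheme of Proposition \ref{p.ex_apriori} for existence and uniqueness, and both check that the forcing terms (in particular the $D_x^2b(t)y^{\varepsilon}(t)^2$ and $D_x^2\sigma^j(t)y^{\varepsilon}(t)^2$ pieces for \eqref{SDE_SecondVariation}) have the required integrability via the polynomial growth hypotheses, \eqref{Lp_estimate}, and Lemma \ref{l.eq.lin}. Your write-up is in fact more careful than the paper's sketch in two respects: you make explicit the order in which the two equations must be solved, and you flag the passage from $\sup_t\E\abs{\cdot}^{2k}$ to membership in $C([0,T];L^2(\Omega;\R^n))$ via a BDG/Gronwall step, a point the paper leaves implicit.
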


\begin{proof}
The proof is standard and it will be only sketched.
Let us begin with the first variation equation. As in the proof of Proposition \ref{p.ex_apriori} we consider for simplicity the case where the SDE is not controlled, then we reduce to an equation of the form
\begin{equation}
\begin{sistema}
dy(t) = D_xb(t)y(t) + \sum_{j=1}^{d} D_x\sigma^j(t)y(t)dW^j(t) \\
y^{\varepsilon}(0) = 0,
\end{sistema}
\end{equation}
For the controlled one the only difference is that it remains to check that the term $\delta\sigma\chi_{E_{\varepsilon}}$ is integrable, but this is obvious thanks to the growth condition on $\sigma$. Now, we fix $\gamma \in C([0,T],L^2(\Omega,\R^n))$ and we define 
\begin{equation*}
J_t(\gamma) = \int_0^t D_xb(s,x(s))\gamma(s)ds + \int_0^t D_x\sigma(s,x(s))\gamma(s)dW(s), \qquad t\in [0,T].
\end{equation*}
Since $\E \abs{D_xb(s,x(s))} \leq C_1(1 + \E\abs{x(s)}^p) \leq C(1 + \abs{x_0}^p)$, for some $p \geq 1$, and using the boundedness of $D\sigma(t)$, it easy to show that $J_t$ is a contraction in $C([0,T_0],L^2(\Omega,\R^n))$ with $T_0$ small enough. The existence and uniqueness of the solution follow straightforward repeating the argument in the successive intervals $[T_0,T_1],[T_1,T_2],\ldots$ up to $T$.

Regarding the second variation equation we use exactly the same technique. Indeed, for any $\gamma \in C([0,T],L^2(\Omega,\R^n))$ we define
\begin{equation*}
\begin{split}
J_t(\gamma) &= \int_0^t D_xb(s,x(s))\gamma(s)ds + \dfrac{1}{2}\int_o^t D^2_xb(s,x(s))y(s)^2 ds\\
&+ \sum_{j=1}^d \int_0^t D_x\sigma^j(s,x(s))\gamma(s)dW^j(s) + \dfrac{1}{2}\sum_{j=1}^d\int_0^t D^2_x\sigma^j(s,x(s))y(s)^2dW^j(s),
\end{split}
\end{equation*} 
using the boundedness of $D_x\sigma(t)$, the dissipativity of $D_xb(t)$ and the a priori estimate given in Lemma \ref{l.eq.lin} we have again that the map $J_t$ is a contraction in $C([0,T_0],L^2(\Omega,\R^n))$, hence with the same arguments as before we have existence and uniqueness of a solution for the second variation equation. 
\end{proof}

Before exhibiting an expansion of the state with respect to small perturbations of the control process we provide a Taylor formula in the form of a lemma.

\begin{lemma}\label{l.expansion}
If $g \in C^2(\R^n)$ then the following equality holds for every $x, \bar{x} \in \R^n$
\[ g(x) = g(\bar{x}) + \braket{D_xg(\bar{x}), x - \bar{x}} + \int^1_0 \braket{\theta D_x^2 g(\theta\bar{x} + (1-\theta)(x- \bar{x})),x-\bar{x}}d\theta.\] 
\end{lemma}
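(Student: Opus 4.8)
The plan is to reduce the statement to the one–dimensional Taylor formula with integral remainder by restricting $g$ to the segment joining $\bar x$ and $x$. Fix $x,\bar x\in\R^n$ and set $\phi(\theta):=g\bigl(\bar x+\theta(x-\bar x)\bigr)$ for $\theta\in[0,1]$. Since $g\in C^2(\R^n)$, the composition $\phi$ is of class $C^2$ on $[0,1]$, with $\phi(0)=g(\bar x)$ and $\phi(1)=g(x)$.

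First I would establish the scalar identity
\[ \phi(1)=\phi(0)+\phi'(0)+\int_0^1(1-\theta)\,\phi''(\theta)\,d\theta, \]
which follows from the fundamental theorem of calculus, $\phi(1)-\phi(0)=\int_0^1\phi'(\theta)\,d\theta$, combined with one integration by parts, using that $\theta\mapsto\theta-1$ is a primitive of the constant $1$ and vanishes at $\theta=1$. Next, by the chain rule,
\[ \phi'(\theta)=\braket{D_xg(\bar x+\theta(x-\bar x)),\,x-\bar x},\qquad \phi''(\theta)=\braket{D_x^2g(\bar x+\theta(x-\bar x))(x-\bar x),\,x-\bar x}, \]
so that $\phi'(0)=\braket{D_xg(\bar x),x-\bar x}$ and the remainder becomes $\int_0^1(1-\theta)\braket{D_x^2g(\bar x+\theta(x-\bar x))(x-\bar x),x-\bar x}\,d\theta$.

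Finally I would perform the change of variable $\theta\mapsto 1-\theta$ in the remainder integral, so that the weight sits on $\theta$ and the base point of the Hessian is rewritten in the form appearing in the statement (note $\bar x+(1-\theta)(x-\bar x)=\theta\bar x+(1-\theta)x$); adding the three contributions then gives the claimed equality.

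The argument is entirely elementary and I do not expect a genuine obstacle. The only points deserving care are the chain–rule computation of $\phi''$, which should be read as the quadratic form $D_x^2 g[x-\bar x,x-\bar x]$ consistent with the bracket notation used in the statement, and the bookkeeping in the final change of variables so that the argument of $D_x^2 g$ matches exactly the one written in the lemma.
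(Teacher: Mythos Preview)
The paper states this lemma without proof, treating it as a standard Taylor formula with integral remainder; your derivation via $\phi(\theta)=g(\bar x+\theta(x-\bar x))$, the scalar identity $\phi(1)=\phi(0)+\phi'(0)+\int_0^1(1-\theta)\phi''(\theta)\,d\theta$, and the substitution $\theta\mapsto 1-\theta$ is exactly the standard argument and is correct.

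One point worth flagging: after your change of variables the Hessian is evaluated at $\bar x+(1-\theta)(x-\bar x)=\theta\bar x+(1-\theta)x$, whereas the lemma as printed has $\theta\bar x+(1-\theta)(x-\bar x)$. These are not the same point, and the printed version is not a convex combination of $\bar x$ and $x$. This is evidently a typographical slip in the statement: when the lemma is actually applied in the paper (in the definition of $\widetilde G_b(t)$ and $\widetilde G_\sigma(t)$ in the proof of Proposition~\ref{p.expansion}(v)), the argument used is $\theta\bar x(t)+(1-\theta)x^\varepsilon(t)$, i.e.\ precisely the point your computation produces. So your proof establishes the version of the identity that the paper in fact uses; the discrepancy lies in the statement, not in your argument.
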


\noindent The central result of this section is the following

\begin{prop}\label{p.expansion}
Suppose hypotheses (H1)-(H5) hold and define $\xi^{\varepsilon}(t):= x^{\varepsilon}(t) - \bar{x}(t)$, $\eta^{\varepsilon}(t):= \xi^{\varepsilon}(t) - y^{\varepsilon}(t)$ and $\zeta^{\varepsilon}(t):=\xi^{\varepsilon}(t) - y^{\varepsilon}(t) - z^{\varepsilon}(t)$. Then for $k = 1,2, \ldots$
\begin{itemize}
\item[(i)] $\sup_{t \in [0,T]} \E\abs{\xi^{\varepsilon}(t)}^{2k} = O(\varepsilon^k)$,
\item[(ii)] $\sup_{t \in [0,T]} \E\abs{y^{\varepsilon}(t)}^{2k}= O(\varepsilon^k)$,
\item[(iii)] $\sup_{t \in [0,T]} \E\abs{z^{\varepsilon}(t)}^{2k}= O(\varepsilon^{2k})$,
\item[(iv)] $\sup_{t \in [0,T]} \E\abs{\eta^{\varepsilon}(t)}^{2k} = O(\varepsilon^{2k})$,
\item[(v)] $\sup_{t \in [0,T]} \E\abs{\zeta^{\varepsilon}(t)}^{2k} = o(\varepsilon^{2k})$.

\end{itemize}
\end{prop}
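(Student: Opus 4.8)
The strategy is the classical one (as in Yong--Zhou \cite{YZ}), but at each step the Lipschitz bound on $b$ must be replaced by the dissipativity condition \eqref{eq.diss.var}, applying Lemma~\ref{l.eq.lin} rather than a Gronwall-on-$L^2$ argument. I would prove the five estimates in the order (ii), (iii), (i)/(iv), (v), exploiting the linear structure of the variation equations.

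First, (ii) and (iii) follow directly from Lemma~\ref{l.eq.lin}. The first variation equation \eqref{SDE_FirstVariation} has drift coefficient $A(t)=D_xb(t)$, which satisfies hypothesis (1) of the lemma with $c=\alpha$ by \eqref{eq.diss.var}; the diffusion coefficient $B^j(t)=D_x\sigma^j(t)$ is bounded by (H3)--(H4) (hypothesis (2)); there is no $\alpha(t)$ term, and $\beta^j(t)=\delta\sigma^j\chi_{E_\varepsilon}(t)$ is bounded with $\int_0^T\{\E|\beta^j(s)|^{2k}\}^{1/2k}ds = O(|E_\varepsilon|)=O(\varepsilon)$ since $\delta\sigma^j$ is bounded by Remark~1. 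Lemma~\ref{l.eq.lin} then yields $\sup_t\E|y^\varepsilon(t)|^{2k}\le K(\int_0^T\{\cdots\}ds)^k=O(\varepsilon^k)$. For (iii), the equation \eqref{SDE_SecondVariation} for $z^\varepsilon$ again has drift coefficient $D_xb(t)$ (dissipative) and bounded diffusion coefficient $D_x\sigma^j(t)$, so hypotheses (1)--(2) hold; the inhomogeneous terms are $\alpha(t)=\delta b(t)\chi_{E_\varepsilon}+\tfrac12 D^2_xb(t)(y^\varepsilon)^2$ and $\beta^j(t)=\delta D_x\sigma^j(t)y^\varepsilon\chi_{E_\varepsilon}+\tfrac12 D^2_x\sigma^j(t)(y^\varepsilon)^2$. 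Here one must control the $L^{2k}$-norms of these forcing terms: $\delta b$ and $\delta D_x\sigma^j$ are bounded (Remark~1), so $\|\delta b\,\chi_{E_\varepsilon}\|$ contributes $O(\varepsilon)$; the terms involving $D^2_xb(y^\varepsilon)^2$ and $D^2_x\sigma^j(y^\varepsilon)^2$ require a H\"older argument --- using the polynomial growth of $D^2_xb,D^2_x\sigma$ in (H4), the a priori moment bound \eqref{Lp_estimate} on $\bar x$ (to control the polynomial in $|\bar x|$), and estimate (ii) for the higher moments of $y^\varepsilon$ --- and one finds $\{\E|D^2_xb(t)(y^\varepsilon)^2|^{2k}\}^{1/2k}=O(\varepsilon)$, so $\int_0^T\{\E|\alpha|^{2k}\}^{1/2k}ds=O(\varepsilon)$ and likewise for $\beta^j$. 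Lemma~\ref{l.eq.lin} then gives $\sup_t\E|z^\varepsilon(t)|^{2k}=O(\varepsilon^{2k})$, using that the $\beta$-term enters to the power $k$ and the $\alpha$-term to the power $2k$.

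Next, for (i) I would first derive it for $k=1$ by a direct It\^o computation on $\E|\xi^\varepsilon(t)|^2$, where $\xi^\varepsilon$ solves $d\xi^\varepsilon = [b(t,x^\varepsilon,u^\varepsilon)-b(t,\bar x,\bar u)]dt + [\sigma(t,x^\varepsilon,u^\varepsilon)-\sigma(t,\bar x,\bar u)]dW$; splitting the drift difference as $[b(t,x^\varepsilon,u^\varepsilon)-b(t,\bar x,u^\varepsilon)]+\delta b(t)\chi_{E_\varepsilon}$ and using \eqref{DISSIPATIVITY} on the first piece, the Lipschitz bound (H3) on $\sigma$, and boundedness of $\delta b,\delta\sigma$ on $E_\varepsilon$, Gronwall gives $\sup_t\E|\xi^\varepsilon(t)|^2=O(\varepsilon)$. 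The higher moments $k\ge 2$ then follow from Lemma~\ref{l.eq.lin} applied to the equation $\eta^\varepsilon=\xi^\varepsilon-y^\varepsilon$: writing $d\eta^\varepsilon$, the drift is $D_xb(t)\eta^\varepsilon$ plus a remainder $r(t)=[b(t,x^\varepsilon,u^\varepsilon)-b(t,\bar x,u^\varepsilon)-D_xb(t)\xi^\varepsilon]+[D_xb(t,\bar x,u^\varepsilon)-D_xb(t)]\xi^\varepsilon+[\delta b(t)-\delta b(t)]\chi_{E_\varepsilon}\cdots$ --- more cleanly, expand $b(t,x^\varepsilon,u^\varepsilon)$ around $\bar x$ via Lemma~\ref{l.expansion}, so the drift of $\eta^\varepsilon$ equals $D_xb(t)\eta^\varepsilon + \alpha(t)$ where $\alpha(t)$ collects a second-order Taylor remainder (controlled by $D^2_xb$ and $|\xi^\varepsilon|^2$, hence $O(\varepsilon)$ in $L^{2k}$-norm by (i) for $2k$ and the polynomial growth) plus terms on $E_\varepsilon$ of the form $[D_xb(t,\bar x,u^\varepsilon)-D_xb(t,\bar x,\bar u)]y^\varepsilon\chi_{E_\varepsilon}$ which are $O(\varepsilon)$ as well; the diffusion part is $D_x\sigma^j(t)\eta^\varepsilon+\beta^j(t)$ with $\beta^j$ again a combination of a Taylor remainder in $\sigma$ and an $E_\varepsilon$-supported term, all $O(\varepsilon)$ in the relevant norm. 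Lemma~\ref{l.eq.lin} then yields $\sup_t\E|\eta^\varepsilon(t)|^{2k}=O(\varepsilon^{2k})$, which is (iv); and (i) for general $k$ follows from $\xi^\varepsilon=\eta^\varepsilon+y^\varepsilon$ together with (ii) and (iv).

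Finally, (v) is the most delicate point and I expect it to be the main obstacle. One writes the equation for $\zeta^\varepsilon=\xi^\varepsilon-y^\varepsilon-z^\varepsilon$; its drift is $D_xb(t)\zeta^\varepsilon$ plus a remainder which, after a second-order Taylor expansion of $b(t,x^\varepsilon,u^\varepsilon)$ about $\bar x$ and subtraction of the defining terms of $y^\varepsilon,z^\varepsilon$, consists of (a) a third-order-type remainder of the form $\int_0^1[D^2_xb(\theta\bar x+(1-\theta)\xi^\varepsilon)-D^2_xb(\bar x)](\xi^\varepsilon)^2 d\theta$ plus $D^2_xb(t)[(\xi^\varepsilon)^2-(y^\varepsilon)^2]$, and (b) terms supported on $E_\varepsilon$ such as $[\delta D_xb(t)](\xi^\varepsilon-y^\varepsilon)\chi_{E_\varepsilon}=\delta D_xb(t)\eta^\varepsilon\chi_{E_\varepsilon}$; the diffusion part is analogous with $\sigma$ in place of $b$. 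The crux is that every forcing term must be shown to be $o(\varepsilon)$ (not merely $O(\varepsilon)$) in the appropriate $L^{2k}$-in-$\omega$, $L^1$-in-$t$ norm: the continuity (not mere boundedness) of $D^2_xb$ and $D^2_x\sigma$ --- which is where the $C^2$ assumption and dominated convergence enter --- gives that $\{\E|[D^2_xb(\cdot+\xi^\varepsilon)-D^2_xb(\cdot)](\xi^\varepsilon)^2|^{2k}\}^{1/2k}=o(\varepsilon)$ using $\xi^\varepsilon\to0$ in probability from (i); the difference $(\xi^\varepsilon)^2-(y^\varepsilon)^2=(\xi^\varepsilon+y^\varepsilon)\eta^\varepsilon$ is estimated by Cauchy--Schwarz together with (i),(ii) giving $O(\varepsilon)$ factors and (iv) giving $O(\varepsilon^{2k})$, hence $o(\varepsilon)$ overall in norm; and the $E_\varepsilon$-supported terms carry a factor $\chi_{E_\varepsilon}$ (an $O(\varepsilon)$ in the time-integral norm) multiplying quantities that already tend to $0$ or are $O(\varepsilon)$, so they are $o(\varepsilon)$. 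Feeding these into Lemma~\ref{l.eq.lin} --- once more using dissipativity \eqref{eq.diss.var} for hypothesis~(1) and boundedness of $D_x\sigma^j$ for hypothesis~(2) --- produces $\sup_t\E|\zeta^\varepsilon(t)|^{2k}=o(\varepsilon^{2k})$. The careful bookkeeping of which estimates among (i)--(iv) supply which powers of $\varepsilon$, combined with the uniform-integrability/continuity argument for the second-derivative remainders, is the part requiring the most attention.
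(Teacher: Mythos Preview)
Your outline for (ii), (iii) and (v) matches the paper's argument closely and is correct. The one genuine defect is a circularity in your treatment of (i) and (iv). You propose to obtain (i) for $k\ge 2$ from (iv), via $\xi^\varepsilon=\eta^\varepsilon+y^\varepsilon$; but in your argument for (iv) the forcing term $\alpha(t)$ contains the second--order Taylor remainder $D_x^2b\,(\xi^\varepsilon)^2$, whose $L^{2k}$-norm you bound ``by (i) for $2k$''. That estimate is not yet available: a H\"older bound on $\{\E|D_x^2b(t)(\xi^\varepsilon)^2|^{2k}\}^{1/2k}$ requires control of $\E|\xi^\varepsilon|^{8k}$, i.e.\ (i) at a \emph{higher} exponent than the one you are establishing. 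No induction on $k$ closes this loop.

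The paper breaks the circle by proving (i) for all $k$ directly, independently of (iv). The point you are missing is that the equation for $\xi^\varepsilon$ itself can be put in the linear form of Lemma~\ref{l.eq.lin}: writing
\[
b(t,x^\varepsilon,u^\varepsilon)-b(t,\bar x,u^\varepsilon)=G_b(t)\xi^\varepsilon,\qquad G_b(t):=\int_0^1 D_xb\bigl(t,\bar x+\theta\xi^\varepsilon,u^\varepsilon\bigr)\,d\theta,
\]
and similarly $G_\sigma^j$ for the diffusion, one gets $d\xi^\varepsilon=[G_b\xi^\varepsilon+\delta b\,\chi_{E_\varepsilon}]\,dt+\sum_j[G_\sigma^j\xi^\varepsilon+\delta\sigma^j\chi_{E_\varepsilon}]\,dW^j$. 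The random coefficient $G_b$ still satisfies hypothesis~(1) of the lemma, since $\langle G_b(t)\xi^\varepsilon,\xi^\varepsilon\rangle=\int_0^1\langle D_xb(\cdots)\xi^\varepsilon,\xi^\varepsilon\rangle\,d\theta\le\alpha|\xi^\varepsilon|^2$ by \eqref{eq.diss.var}, and $G_\sigma^j$ is bounded by (H3). Lemma~\ref{l.eq.lin} then gives $\sup_t\E|\xi^\varepsilon|^{2k}=O(\varepsilon^k)$ for every $k$ in one stroke (here $\delta b$ is not literally bounded as you write, but has all $L^p$ moments via polynomial growth and \eqref{Lp_estimate}, which is what the lemma needs). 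With (i) in hand for all $k$, your argument for (iv) --- or the paper's variant using the first--order remainder $[G_b-D_xb]\xi^\varepsilon$ --- goes through. Equivalently, your direct It\^o--Gronwall computation for $k=1$ extends verbatim to $|\xi^\varepsilon|^{2k}$ using \eqref{DISSIPATIVITY}; there is no need to route (i) through (iv).
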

\begin{proof}
The idea of the proof is to reduce every equation to a linear SDE with appropriate coefficients and to use lemma \eqref{l.eq.lin}.\\
\newline
\textbf{(i)} For $\xi^{\varepsilon}(t)$ we have
\begin{equation*}
\begin{split}
d\xi^{\varepsilon}(t) &= \left[ b(t,x^{\varepsilon}(t),u^{\varepsilon}(t)) - b(t,\bar{x}(t),\bar{u}(t))\right]dt \\
&+ \sum_{j=1}^d \left[ \sigma^j(t,x^{\varepsilon}(t),u^{\varepsilon}(t)) - \sigma^j(t,\bar{x}(t),\bar{u}(t)) \right]dW^j(t) \\
&= \left[ b(t,x^{\varepsilon}(t),u^{\varepsilon}(t)) - b(t,\bar{x}(t),u^{\varepsilon}(t)) + \delta b(t)\chi_{E_{\varepsilon}}(t) \right]dt \\
&+ \sum_{j=1}^d\left[ \sigma^j(t,x^{\varepsilon}(t),u^{\varepsilon}(t)) - \sigma^j(t,\bar{x}(t),u^{\varepsilon}(t)) + \delta\sigma^j(t)\chi_{E_{\varepsilon}}(t) \right]dW^j(t) \\
&= \int_0^1 D_xb(t,\bar{x}(t) + \theta\xi^{\varepsilon}(t),u^{\varepsilon}(t))d\theta\xi^{\varepsilon}(t) dt + \delta b(t)\chi_{E_{\varepsilon}}(t)dt \\
&+ \sum_{j=1}^d \int_0^1 D_x\sigma^j(t,\bar{x}(t) + \theta\xi^{\varepsilon}(t),u^{\varepsilon}(t))d\theta\xi^{\varepsilon}(t) dW^j(t) + \delta\sigma^j(t)dW^j(t) \\
&= \bigl[ G_b(t)\xi^{\varepsilon}(t) + \delta b(t)\chi_{E_{\varepsilon}}(t)\bigr]dt + \sum_{j=1}^d \bigl[ G^j_{\sigma}(t)\xi^{\varepsilon}(t) + \delta \sigma^j(t)\chi_{E_{\varepsilon}}(t)\bigr]dW^j(t)
\end{split}
\end{equation*}
where 
\[ G_b(t):= \int_0^1 D_xb(t,\bar{x}(t) + \theta\xi^{\varepsilon}(t),u^{\varepsilon}(t))d\theta, \]
\[ G_{\sigma}^j(t):= \int_0^1 D_x\sigma^j(t,\bar{x}(t) + \theta\xi^{\varepsilon}(t),u^{\varepsilon}(t))d\theta. \]
Next we apply Lemma \ref{l.eq.lin} noting
\[ \braket{G_b(t)\xi^{\varepsilon}(t),\xi^{\varepsilon}(t)} = \braket{\int_0^1 D_xb(t,\bar{x} + \theta\xi^{\varepsilon},u^{\varepsilon})\xi^{\varepsilon}(t) d\theta,\xi^{\varepsilon}(t)} \leq \alpha\abs{\xi^{\varepsilon}(t)}^2, \]
i.e. condition 1 of Lemma \ref{l.eq.lin} is verified.
Then we obtain
\begin{equation}\label{eq.deltab}
\begin{split}
\sup_{t\in [0,T]}\E\abs{\xi^{\varepsilon}(t)}^{2k} &\leq K\Bigl[ \int_0^T \abs{\delta b(t)\chi_{E_{\varepsilon}}(t)}_{L^{2k}(\Omega)}dt\Bigr]^{2k} + K\sum_{j=1}^d \Bigl[ \int_0^T \abs{\delta \sigma^{j}(t)\chi_{E_{\varepsilon}}(t)}_{L^{2k}(\Omega)}dt\Bigr]^{k}\\
&\leq K \Bigl[ \int_{E_{\varepsilon}} \abs{b(t,\bar{x}(t),u^{\varepsilon}(t)) - b(t,\bar{x}(t),\bar{u}(t))}_{L^{2k}(\Omega)}dt\Bigr]^{2k}  + K\varepsilon^{k}\\
&\leq K[\varepsilon^{2k} + \varepsilon^{k}] \leq K\varepsilon^{k}.
\end{split}
\end{equation}
thanks to the polynomial growth and \eqref{Lp_estimate}.\\
\newline
\textbf{(ii)}
Using the dissipativity of $D_xb$ and lemma \eqref{l.eq.lin} the estimate for $y^{\varepsilon}$ follows in the same way.\\
\newline
\textbf{(iii)} For $z^{\varepsilon}$ we have, proceeding as before,
\begin{equation*}
\begin{split}
\sup_{t\in [0,T]}\E\abs{z^{\varepsilon}(t)}^{2k} &\leq K\Bigl[ \int_0^T \abs{\delta b(t)\chi_{E_{\varepsilon}}(t) + \dfrac{1}{2}D^2_xb(t)y^{\varepsilon}(t)^2}_{L^{2k}(\Omega)}dt\Bigr]^{2k} \\
&+ K \sum_{j=1}^d \Bigl[ \int_0^T \Bigl(\E\abs{\delta D_x\sigma^j(t)\chi_{E_{\varepsilon}}(t)y^{\varepsilon}(t) + \dfrac{1}{2}D^2_x\sigma^j(t)y^{\varepsilon}(t)^2}^{2k}\Bigr)^{\frac{1}{k}}dt\Bigr]^{k}\\
&\leq K\Bigl[ \int_0^T \Bigl(\chi_{E_{\varepsilon}}(t) + K\varepsilon \Bigr)dt\Bigr]^{2k} + K \Bigl[ \int_0^T \Bigl(\chi_{E_{\varepsilon}}(t)\varepsilon + \varepsilon^2\Bigr)dt\Bigr]^{k} \\
&\leq K\varepsilon^{2k},
\end{split}
\end{equation*}

where we used the H\"{o}lder inequality, the estimate obtained in (ii) for $y^{\varepsilon}$, and the following
\begin{equation}\label{eq.general_estimate}
\bigl(\E\abs{D_x^2b(t)}^{4k}\bigr)^{\frac{1}{4k}} \leq K(1+ \E\abs{\bar{x}(t)}^{4kh})^{\frac{1}{4k}} \leq K(1 + \abs{x_0}^h) \leq K.
\end{equation}
\newline
\noindent \textbf{(iv)} Using the result obtained for $\xi^{\varepsilon}$ and $y^{\varepsilon}$ we can write
\begin{equation*}
\begin{split}
d\eta^{\varepsilon}(t) &= \left[ b(t,x^{\varepsilon}(t),u^{\varepsilon}(t)) - b(t,\bar{x}(t),u^{\varepsilon}(t)) + \delta b(t)\chi_{E_{\varepsilon}}(t) - D_xb(t)y^{\varepsilon}(t) \right]dt \\
&+ \sum_{j=1}^d \left[ \sigma^j(t,x^{\varepsilon}(t),u^{\varepsilon}(t)) - \sigma^j(t,\bar{x}(t),u^{\varepsilon}(t)) - D_x\sigma^j(t) y^{\varepsilon}(t) \right]dW^j(t) \\
&= D_x b(t)\eta^{\varepsilon}(t)dt + \left[ \int_0^1(Db_x(t,\bar{x}(t) + \theta\xi^{\varepsilon}(t),u^{\varepsilon}(t)) - D_xb(t)) d\theta\right]\xi^{\varepsilon}(t) dt\\
&+ \delta b(t)\chi_{E_{\varepsilon}}(t)dt + \sum_{j=1}^dD_x\sigma^j(t) \eta^{\varepsilon}(t) dW^j(t)\\
&+ \sum_{j=1}^d\left[ \int_0^1 (D_x\sigma^j(t,\bar{x}(t) + \theta\xi^{\varepsilon}(t),u^{\varepsilon}(t))  - D_x\sigma^j(t)) d\theta \right]\xi^{\varepsilon}(t) dW^j(t) \\
&= D_xb(t)\eta^{\varepsilon}(t) dt + \delta b(t)\chi_{E_{\varepsilon}}(t)dt + \left[G_b(t) - D_xb(t)\right]\xi^{\varepsilon}(t) dt\\
&+ \sum_{j=1}^d D_x\sigma^j(t) \eta^{\varepsilon}(t)dW^j(t) + \sum_{j=1}^d \bigl[(G_{\sigma}^j(t) - D_x^j\sigma(t))\xi^{\varepsilon}(t) \bigr]dW^j(t)\\
&= D_xb(t)\eta^{\varepsilon}(t) dt + \alpha^{\varepsilon}(t)dt + \sum_{j=1}^d \left[ D_x\sigma^j \eta^{\varepsilon}(t) + \beta^{j,\varepsilon}(t)\right] dW^j(t),
\end{split}
\end{equation*}
where we have defined 
\begin{equation*}
\begin{split}
\alpha^{\varepsilon}(t)&:= \delta b(t)\chi_{E_{\varepsilon}}(t) + \left[G_b(t) - D_xb(t)\right]\xi^{\varepsilon}(t);\\
\beta^{j,\varepsilon}(t)&:= (G_{\sigma}^j(t) - D_x^j\sigma(t))\xi^{\varepsilon}(t).
\end{split}
\end{equation*}
We begin studying $\alpha^{\varepsilon}(\cdot)$ as follows
\begin{equation*}
\begin{split}
\int_0^T \abs{\alpha^{\varepsilon}(t)}_{L^{2k}(\Omega)}dt &\leq K \int_0^T \Bigl[ \bigl( \E \abs{\delta b(t)\chi_{E_{\varepsilon}}(t)}^{2k} \bigr)^{\frac{1}{2k}} + \bigl( \E\abs{\bigl[ G_b(t) - D_xb(t) \bigr]\xi^{\varepsilon}(t)}^{2k} \bigr)^{\frac{1}{2k}} \Bigr] dt\\
&\leq K \int_{E_{\varepsilon}} \abs{\delta b(t)}_{L^{2k}(\Omega)}dt + K\varepsilon^{1/2} \int_0^T \abs{G_b(t) - D_xb(t)}_{L^{4k}(\Omega)}dt \\
\end{split}
\end{equation*}
where we used the same estimate as in \eqref{eq.deltab} for the first part and for the second term we applied the H\"{older} inequality. Now we want to estimate the last term. We have
\begin{equation*}
\begin{split}
G_b(t) - D_xb(t) &= \int_0^1\bigl[ D_xb(t,\bar{x}(t) + \theta\xi^{\varepsilon}(t),u^{\varepsilon}(t)) - D_xb(t)\bigr] d\theta \\
&= \int_0^1\bigl[D_xb(t,\bar{x}(t) + \theta\xi^{\varepsilon}(t),u^{\varepsilon}(t)) - D_xb(t,\bar{x}(t) + \theta \xi^{\varepsilon}(t),\bar{u}(t))\bigr] d\theta \\
&+ \int_0^1\bigl[ D_xb(t,\bar{x}(t) + \theta\xi^{\varepsilon}(t),\bar{u}(t)) - D_xb(t)\bigr] d\theta.
\end{split}
\end{equation*}
Here the second term has the same control process in both the integrands. Hence, using a Taylor expansion and the H\"{o}lder inequality, we get
\begin{equation}
\begin{split}
&\int_0^T \abs{\int_0^1\bigl[ D_xb(t,\bar{x}(t) + \theta\xi^{\varepsilon}(t),\bar{u}(t)) - D_xb(t)\bigr] d\theta}_{L^{4k}(\Omega)}dt \\
&= \int_0^T \abs{\int_0^1 D_x^2b(t,\tilde{x},\bar{u}(t))\theta\xi^{\varepsilon}(t) d\theta}_{L^{4k}(\Omega)}dt \\
&\leq \int_0^T \bigl(\E\abs{D_x^2b(t,\tilde{x},\bar{u}(t))}^{8k}\bigr)^{\frac{1}{8k}}\bigl(\E\abs{\xi^{\varepsilon}(t)}^{8k}\bigr)^{\frac{1}{8k}}dt \leq K\varepsilon^{1/2},
\end{split}
\end{equation}
where the last inequality follows from point $(i)$, the polynomial growth of $D_x^2b$  and \eqref{Lp_estimate}. Then we have

\begin{equation*}
\begin{split}
\int_0^T \abs{G_b(t) - D_xb(t)}_{L^{4k}(\Omega)}dt &\leq \int_{E_{\varepsilon}} \abs{\int_0^1\bigl[ D_xb(t,\bar{x}(t) + \theta\xi^{\varepsilon}(t),u^{\varepsilon}(t)) \\
&- D_xb(t,\bar{x}(t) + \theta \xi^{\varepsilon}(t),\bar{u}(t))\bigr] d\theta}_{L^{4k}(\Omega)}dt\\
&+ \int_0^T \abs{\int_0^1\bigl[ D_xb(t,\bar{x}(t) + \theta\xi^{\varepsilon}(t),\bar{u}(t)) - D_xb(t)\bigr] d\theta}_{L^{4k}(\Omega)}dt \\
&\leq K(\varepsilon + \varepsilon^{1/2}).
\end{split}
\end{equation*}
And we can conclude, in fact 
\begin{equation*}
\begin{split}
\int_0^T \abs{\alpha^{\varepsilon}(t)}_{L^{2k}(\Omega)}dt &\leq K \int_{E_{\varepsilon}} \abs{\delta b(t)}_{L^{2k}(\Omega)}dt + K\varepsilon^{1/2} \int_0^T \abs{G_b(t) - D_xb(t)}_{L^{4k}(\Omega)}dt \\
&\leq K \varepsilon. 
\end{split}
\end{equation*}
Regarding the estimate of $\beta^{\varepsilon}(t)$ we can proceed  
in the same way in order to obtain 
\[ \int_0^T \bigl[ \E\abs{G_{\sigma}^j(t) - D_x^j\sigma(t)}^{4k} \bigr]^{\frac{1}{2k}}dt \leq K\varepsilon. \]
From Lemma \ref{l.eq.lin} we have 
\begin{equation}
\begin{split}
\sup_t \E\abs{\eta^{\varepsilon}(t)}^{2k} &\leq K\left( \int_0^T \graf{\E\abs{\alpha(s)}^{2k}}^{\frac{1}{2k}}\right)^{2k} + K \sum_{j=1}^m \left( \int_0^T \graf{\E\abs{\beta^j(s)}^{2k}}^{\frac{1}{2k}}ds \right)^k \\
&\leq K[\varepsilon^{2k} + \varepsilon^{2k}] = O(\varepsilon^{2k}).
\end{split}
\end{equation}
\newline
\noindent \textbf{(v)} Also in this case we aim to use Lemma \ref{l.eq.lin}, combined with Lemma \ref{l.expansion}, in order to get the required estimate. If we write $d\zeta^{\varepsilon}(t) = d(\eta^{\varepsilon}(t) - \xi^{\varepsilon}(t))$ then the corresponding stochastic differential equation has the form
\[\begin{cases}
d\zeta^{\varepsilon}(t) = \left( D_xb(t)\zeta^{\varepsilon}(t) + \alpha^{\varepsilon}(t) \right)dt + \sum_{j=1}^d\left( D_x\sigma^j(t)\zeta^{\varepsilon}(t) + \beta^{j,\varepsilon}(t) \right)dW^j(t)\\
\zeta^{\varepsilon}(0) = 0;
\end{cases}\]
where
\begin{equation*}
\begin{split}
\alpha^{\varepsilon}(t) &:= \delta D_xb(t)\chi_{E_{\varepsilon}}(t)\xi^{\varepsilon}(t) + \dfrac{1}{2}\bigl[ \widetilde{G}_{b}(t) - D_x^2b(t,\bar{x}(t),u^{\varepsilon}(t)) \bigr]\xi^{\varepsilon}(t)^2 \\
&+ \dfrac{1}{2}\delta D_x^2b(t)\chi_{E_{\varepsilon}}(t)\xi^{\varepsilon}(t)^2 + \dfrac{1}{2}D_x^2b(t)[ \xi^{\varepsilon}(t)^2 - y^{\varepsilon}(t)^2 ],\\
&\quad \\
\beta^{\varepsilon}(t) &:= \delta D_x\sigma(t)\chi_{E_{\varepsilon}}(t)\eta^{\varepsilon}(t) + \dfrac{1}{2}\bigl[ \widetilde{G}_{\sigma}(t) - D_x^2\sigma(t,\bar{x}(t),u^{\varepsilon}(t)) \bigr]\xi^{\varepsilon}(t)^2 \\
&+ \dfrac{1}{2}\delta D_x^2\sigma(t)\chi_{E_{\varepsilon}}(t)\xi^{\varepsilon}(t)^2 + \dfrac{1}{2}D_x^2\sigma(t)[ \xi^{\varepsilon}(t)^2 - y^{\varepsilon}(t)^2 ],
\end{split}
\end{equation*}
and
\[\begin{cases}
\widetilde{G}_{b}(t) := 2 \int_0^1 \theta D_x^2b(t,\theta\bar{x}(t) + (1-\theta)x^{\varepsilon}(t),u^{\varepsilon}(t))d\theta,\\
\widetilde{G}_{\sigma}(t) := 2 \int_0^1 \theta D_x^2\sigma(t,\theta\bar{x}(t) + (1-\theta)x^{\varepsilon}(t),u^{\varepsilon}(t))d\theta.
\end{cases}\]
Indeed, using the equality in Lemma \ref{l.expansion}, for the drift part we have 
\begin{equation*}
\begin{split}
&b(t,x^{\varepsilon}(t),u^{\varepsilon}(t)) - b(t,\bar{x}(t),u^{\varepsilon}(t)) - D_xb(t)[y^{\varepsilon}(t) + z^{\varepsilon}(t)] - \dfrac{1}{2}D_x^2b(t)y^{\varepsilon}(t)^2 \\
&= D_xb(t,\bar{x},u^{\varepsilon}(t))\xi^{\varepsilon}(t) + \dfrac{1}{2}\widetilde{G}_b(t)\xi^{\varepsilon}(t)^2 - D_xb(t)[y^{\varepsilon}(t) + z^{\varepsilon}(t)] - \dfrac{1}{2}D_x^2b(t)y^{\varepsilon}(t)^2 \\
&= D_xb(t)\zeta^{\varepsilon}(t) + \alpha^{\varepsilon}(t).
\end{split}
\end{equation*}
For the diffusion term we can proceed in the same way
\begin{equation*}
\begin{split}
&\sigma^j(t,x^{\varepsilon}(t),u^{\varepsilon}(t)) - \sigma^j(t,\bar{x}(t),u^{\varepsilon}(t)) - D_x\sigma^j(t)[y^{\varepsilon}(t) + z^{\varepsilon}(t)] \\
&- \dfrac{1}{2}D_x^2\sigma^j(t)y^{\varepsilon}(t)^2  - \delta D_x\sigma^j(t)\chi_{E_{\varepsilon}}(t)y^{\varepsilon}(t) \\
&= D_x\sigma^j(t,\bar{x},u^{\varepsilon}(t))\xi^{\varepsilon}(t) + \dfrac{1}{2}\widetilde{G}_\sigma^j(t)\xi^{\varepsilon}(t)^2 - D_x\sigma^j(t)[y^{\varepsilon}(t) + z^{\varepsilon}(t)] \\
&- \dfrac{1}{2}D_x^2\sigma^j(t)y^{\varepsilon}(t)^2  - \delta D_x\sigma^j(t)\chi_{E_{\varepsilon}}(t)y^{\varepsilon}(t)\\
&= D_x\sigma^j(t)\zeta^{\varepsilon}(t) + \beta^{j,\varepsilon}(t).
\end{split}
\end{equation*}

Now we estimate $\alpha^{\varepsilon}(\cdot)$ as follows:
\begin{equation*}
\begin{split}
\int_0^T \abs{\alpha^{\varepsilon}(t)}_{L^{2k}(\Omega)}dt &\leq \int_0^T \Bigl[ \bigl( \E \abs{\delta D_xb(t)\chi_{E_{\varepsilon}}(t)\xi^{\varepsilon}(t)}^{2k} \bigr)^{\frac{1}{2k}} \\
&+ \dfrac{1}{2}\bigl( \E\abs{\bigl[ \widetilde{G}_b(t) - D_x^2b(t,\bar{x}(t),u^{\varepsilon}(t)) \bigr]\xi^{\varepsilon}(t)^2}^{2k} \bigr)^{\frac{1}{2k}} \\
&+ \dfrac{1}{2}\bigl( \E \abs{\delta D_x^2b(t)\chi_{E_{\varepsilon}}(t)\xi^{\varepsilon}(t)^2}^{2k}\bigr)^{\frac{1}{2k}} \\
&+ \dfrac{1}{2}\bigl( \E\abs{ D_x^2b(t)[ \xi^{\varepsilon}(t)^2 - y^{\varepsilon}(t)^2]}^{2k} \bigr)^{\frac{1}{2k}}\Bigr]dt \\
&\leq K \int_{E_{\varepsilon}}\bigl( \E\abs{\delta D_xb(t)}^{4k} \bigr)^{\frac{1}{4k}}\bigl( \E\abs{\xi^{\varepsilon}(t)}^{4k} \bigr)^{\frac{1}{4k}}dt \\
&+ K\int_0^T\bigl( \E\abs{\widetilde{G}_b(t) - D_x^2b(t,\bar{x}(t),u^{\varepsilon}(t))}^{4k} \bigr)^{\frac{1}{4k}}\bigl( \E\abs{\xi^{\varepsilon}(t)}^{8k} \bigr)^{\frac{1}{4k}}dt\\
&+K\int_{E_{\varepsilon}}\bigl( \E\abs{\delta D_x^2b(t)}^{4k} \bigr)^{\frac{1}{4k}}\bigl( \E\abs{\xi^{\varepsilon}(t)}^{8k} \bigr)^{\frac{1}{4k}}dt \\
&+ K \int_0^T \bigl( \E\abs{D_x^2b(t)}^{4k} \bigr)^{\frac{1}{4k}}\bigl( \E\abs{\eta^{\varepsilon}(t)}^{8k} \bigr)^{\frac{1}{8k}}\bigl( \E\abs{\xi^{\varepsilon}(t) + y^{\varepsilon}(t)}^{8k} \bigr)^{\frac{1}{8k}}dt \\
\end{split}
\end{equation*}
Here we used the polynomial growth of $b, D_xb, D_x^2 b$ as well as the a priori estimate \eqref{Lp_estimate} of the solution of the state equation. Let us focus on the second term and in particular on $\E\abs{\widetilde{G}_b(t) - D_x^2b(t,\bar{x}(t),u^{\varepsilon}(t))}^{4k}$. We get
\begin{equation}
\begin{split}
&\widetilde{G}_{b}(t) - D_x^2b(t,\bar{x}(t),u^{\varepsilon}(t))= \\
&=2 \int_0^1 \theta D_x^2b(t,\theta\bar{x}(t) + (1-\theta)x^{\varepsilon}(t),u^{\varepsilon}(t))d\theta - D_x^2b(t,\bar{x}(t),u^{\varepsilon}(t))\\
&= 2 \int_0^1 \theta \Bigl[ D_x^2b(t,\theta\bar{x}(t) + (1-\theta)x^{\varepsilon}(t),u^{\varepsilon}(t)) - D_x^2b(t,\bar{x}(t),u^{\varepsilon}(t))\Bigr] d\theta
\end{split}
\end{equation}

Now we want to show that the quantity above tends to zero as $\varepsilon \rightarrow 0$. Arguing by contradiction, we suppose that there exists a sequence $\varepsilon_n\rightarrow 0$ such that

\[ \int_0^T \abs{2 \int_0^1 \theta \Bigl[ D_x^2b(t,\theta\bar{x}(t) + (1-\theta)x^{\varepsilon_n}(t),u^{\varepsilon_n}(t)) - D_x^2b(t,\bar{x}(t),u^{\varepsilon_n}(t))\Bigr] d\theta}_{L^{4k}(\Omega)}dt \geq \delta > 0, \]
but from point $(i)$ we have that $\sup_t\abs{\xi^{\varepsilon_n}(t)}_{L^{4k}(\Omega)} \rightarrow 0$, hence there is a subsequence $\varepsilon_{n_k}$ such that $\xi^{\varepsilon}_{n_k} \rightarrow 0$, that is $x^{\varepsilon}_{n_k} \rightarrow \bar{x}$ $d\mP \times dt$-a.s.. Now, using dominated convergence theorem (i.e. $D_x^2b$ has polynomial growth), thanks to the continuity of $D_x^2b$, we get
\[ \int_0^T \abs{2 \int_0^1 \theta \Bigl[ D_x^2b(t,\theta\bar{x}(t) + (1-\theta)x^{\varepsilon}_{n_k}(t),u^{\varepsilon}_{n_k}(t)) - D_x^2b(t,\bar{x}(t),u^{\varepsilon}_{n_k}(t))\Bigr] d\theta}_{L^{4k}(\Omega)}dt \rightarrow 0, \]
that is absurd. Finally we have
\begin{equation*}
\begin{split}
\int_0^T \abs{\alpha^{\varepsilon}(t)}_{L^{2k}(\Omega)}dt &\leq K \int_0^T \bigl( \E\abs{\widetilde{G}_b(t) - D_x^2b(t,\bar{x}(t),u^{\varepsilon}(t))}^{4k} \bigr)^{\frac{1}{4k}}\bigl( \E\abs{\xi^{\varepsilon}(t)}^{8k} \bigr)^{\frac{1}{4k}}dt\\
&+K \int_{E_{\varepsilon}}\bigl( \E\abs{\xi^{\varepsilon}(t)}^{4k} \bigr)^{\frac{1}{4k}}dt + K\int_{E_{\varepsilon}}\bigl( \E\abs{\xi^{\varepsilon}(t)}^{8k} \bigr)^{\frac{1}{4k}}dt\\
&+ K \int_0^T \bigl( \E\abs{\eta^{\varepsilon}(t)}^{8k} \bigr)^{\frac{1}{8k}}\bigl( \E\abs{\xi^{\varepsilon}(t) + y^{\varepsilon}(t)}^{8k} \bigr)^{\frac{1}{8k}}dt \\
&\leq o(\varepsilon) + K[ \varepsilon^{3/2} + \varepsilon^2 + \varepsilon^{3/2} ] = o(\varepsilon).
\end{split}
\end{equation*}
For $\beta^{\varepsilon}(t)$ we use the boundedness of the derivative of $\sigma(\cdot)$ and proceeding in the same way we obtain
\begin{equation*}
\int_0^T \abs{\beta^{j,\varepsilon}(t)}_{L^{2k}(\Omega)}dt = o(\varepsilon^2)
\end{equation*}
Then, thanks to Lemma \ref{l.eq.lin} the desired result follows.
\end{proof}

Focusing on the cost functional, now we deduce a Taylor expansion of the cost with respect to the spike variation of the control process in order to use some duality argument.

\begin{prop}\label{p.cost_expansion}
Under assumptions (H1)-(H5) we have the following expansion of the cost functional
\begin{equation*}
\begin{split}
J(u^{\varepsilon}(\cdot)) - J(\bar{u}(\cdot)) &= \E \int_0^T \Bigl[ \braket{D_xf(t), y^{\varepsilon}(t) + z^{\varepsilon}(t)} + \dfrac{1}{2}\braket{D_x^2f(t)y^{\varepsilon}(t), y^{\varepsilon}(t)} + \delta f(t)\chi_{E_{\varepsilon}}\Bigr]dt \\
&+ \E\braket{h_x(\bar{x}(T)),y^{\varepsilon}(T) + z^{\varepsilon}(T)} + \dfrac{1}{2}\E\braket{D_x^2h(\bar{x}(T))y^{\varepsilon}(T), y^{\varepsilon}(T)} + o(\varepsilon).
\end{split}
\end{equation*}
where $D_xf(t):= D_xf(t,\bar{x}(t),\bar{u}(t))$ and $D^2_xf(t):= D^2_xf(t,\bar{x}(t),\bar{u}(t))$.
\end{prop}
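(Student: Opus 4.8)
The plan is to start from the identity
\[
J(u^{\varepsilon}(\cdot)) - J(\bar{u}(\cdot)) = \E\int_0^T\bigl[ f(t,x^{\varepsilon}(t),u^{\varepsilon}(t)) - f(t,\bar{x}(t),\bar{u}(t))\bigr]dt + \E\bigl[ h(x^{\varepsilon}(T)) - h(\bar{x}(T))\bigr]
\]
and to Taylor--expand each term to second order in $\xi^{\varepsilon}=x^{\varepsilon}-\bar{x}$. Since $u^{\varepsilon}=\bar{u}$ off $E_{\varepsilon}$, for the running cost I would first write $f(t,x^{\varepsilon},u^{\varepsilon})-f(t,\bar{x},\bar{u}) = \bigl[f(t,x^{\varepsilon},u^{\varepsilon})-f(t,\bar{x},u^{\varepsilon})\bigr] + \delta f(t)\chi_{E_{\varepsilon}}(t)$, and then apply Lemma \ref{l.expansion} to the first bracket; this produces the first order term $\braket{D_xf(t,\bar{x}(t),u^{\varepsilon}(t)),\xi^{\varepsilon}(t)}$ together with an integral remainder in which $D_x^2f$ is evaluated at the convex combination $\theta\bar{x}(t)+(1-\theta)x^{\varepsilon}(t)$ and at the control $u^{\varepsilon}(t)$. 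The terminal term $h(x^{\varepsilon}(T))-h(\bar{x}(T))$ is expanded in exactly the same way, with $\xi^{\varepsilon}(T)$ replacing $\xi^{\varepsilon}(t)$ and no running integral. The stated expansion is then reached by a chain of replacements, each producing an $o(\varepsilon)$ error thanks to Proposition \ref{p.expansion}.

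The three replacements are as follows. (a) In $D_xf$ and in every second derivative, replace $u^{\varepsilon}$ by $\bar{u}$: the difference is supported on $E_{\varepsilon}$, and since $D_xf,D_x^2f$ have polynomial growth in $x$ while all moments of $\bar{x}$ and $x^{\varepsilon}$ are bounded by \eqref{Lp_estimate}, H\"older's inequality together with $\sup_t\abs{\xi^{\varepsilon}(t)}_{L^{p}(\Omega)}=O(\varepsilon^{1/2})$ (part (i) of Proposition \ref{p.expansion}) turns these into terms of size $\varepsilon\cdot\varepsilon^{1/2}=o(\varepsilon)$ in the linear part and $\varepsilon\cdot\varepsilon=o(\varepsilon)$ in the quadratic one. (b) In the first order term, now $\braket{D_xf(t),\xi^{\varepsilon}(t)}$, replace $\xi^{\varepsilon}$ by $y^{\varepsilon}+z^{\varepsilon}$: the error is $\E\int_0^T\braket{D_xf(t),\zeta^{\varepsilon}(t)}dt$ together with the analogous terminal term, both bounded — up to a constant coming from the bounded $L^2(\Omega)$--norms of $D_xf(t)$ and $D_xh(\bar{x}(T))$ — by $\sup_t\abs{\zeta^{\varepsilon}(t)}_{L^2(\Omega)}$, which is $o(\varepsilon)$ by part (v). (c) In the quadratic term replace $\xi^{\varepsilon}\otimes\xi^{\varepsilon}$ by $y^{\varepsilon}\otimes y^{\varepsilon}$ via $\xi^{\varepsilon}\otimes\xi^{\varepsilon}-y^{\varepsilon}\otimes y^{\varepsilon}=\eta^{\varepsilon}\otimes\xi^{\varepsilon}+y^{\varepsilon}\otimes\eta^{\varepsilon}$ and estimate each piece by H\"older with three factors (the polynomially growing Hessian bounded in $L^4(\Omega)$ by \eqref{Lp_estimate}, the factor $\eta^{\varepsilon}$ in $L^4(\Omega)$ of size $O(\varepsilon)$ by (iv), and $\xi^{\varepsilon}$ or $y^{\varepsilon}$ in $L^2(\Omega)$ of size $O(\varepsilon^{1/2})$ by (i)--(ii)), obtaining $O(\varepsilon^{3/2})=o(\varepsilon)$.

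What remains — and this is the step I expect to be the main obstacle — is to replace the Hessian still sitting inside the Taylor remainder, namely $D_x^2f\bigl(t,\theta\bar{x}(t)+(1-\theta)x^{\varepsilon}(t),u^{\varepsilon}(t)\bigr)$, by $D_x^2f(t)=D_x^2f(t,\bar{x}(t),\bar{u}(t))$; after step (c) this coefficient multiplies a quantity of size $O(\varepsilon)$ in the relevant norm, and the $\theta$--integration of the (by then $\theta$--independent) coefficient produces the factor $\tfrac12$ appearing in the statement. Since $U$ is only a metric space and $f$ is merely continuous — not Lipschitz — in $(x,u)$, this cannot be settled by a direct estimate; instead I would argue exactly as in part (v) of Proposition \ref{p.expansion}: first peel off the part supported on $E_{\varepsilon}$, where the control differs from $\bar{u}$, which is $O(\varepsilon)$ by the growth of $D_x^2f$ and \eqref{Lp_estimate}; for the remaining part argue by contradiction, using part (i) to extract a subsequence $\varepsilon_n\to0$ along which $x^{\varepsilon_n}\to\bar{x}$ $\;d\mP\times dt$--a.s., and conclude by dominated convergence — the polynomial growth of $D_x^2f$ together with \eqref{Lp_estimate} furnishing the majorant, the continuity of $D_x^2f$ giving the pointwise limit — so that $\int_0^T\Bigl|\int_0^1\theta\bigl[D_x^2f(t,\theta\bar{x}(t)+(1-\theta)x^{\varepsilon}(t),\bar{u}(t))-D_x^2f(t)\bigr]d\theta\Bigr|_{L^{p}(\Omega)}dt\to0$. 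The same reasoning disposes of the corresponding remainder for $h$ at time $T$. Collecting the surviving terms of the three expansions yields the asserted identity, with all discarded contributions shown to be $o(\varepsilon)$.
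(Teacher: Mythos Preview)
Your proposal is correct and follows exactly the approach the paper intends: the paper's proof is a single sentence invoking Lemma~\ref{l.expansion}, Proposition~\ref{p.expansion}, the polynomial growth of $f$ and $h$, and a reference to \cite{YZ}, Theorem~4.4, and you have simply written out in detail what that ``straightforward calculation'' amounts to---including, in your final step, the same contradiction/subsequence/dominated-convergence device that the paper uses in part~(v) of Proposition~\ref{p.expansion} to handle the Hessian at the intermediate point.
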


\begin{proof}
Using Lemma \ref{l.expansion}, Proposition \ref{p.expansion} and the polynomial growth of $f$ and $h$, this is a straightforward calculation (see for example \cite{YZ}, Theorem 4.4, page 133).
\end{proof}

\section{Proof of Theorem \ref{t.SMP}} 

In the preceding section we studied how the optimal trajectory varies after a small perturbation of the control process. The goal was to have an expansion of the cost and produce a preliminary necessary condition for a given optimal pair. In particular what we obtained is the following
\begin{equation}\label{eq.preliminary_condition}
\begin{split}
0 &\leq \E \int_0^T \Bigl[ \braket{D_xf(t), y^{\varepsilon}(t) + z^{\varepsilon}(t)} + \dfrac{1}{2}\braket{D_x^2f(t)y^{\varepsilon}(t), y^{\varepsilon}(t)} + \delta f(t)\chi_{E_{\varepsilon}}\Bigr]dt \\
&+ \E\braket{h_x(\bar{x}(T)),y^{\varepsilon}(T) + z^{\varepsilon}(T)} + \dfrac{1}{2}\E\braket{D_x^2h(\bar{x}(T))y^{\varepsilon}(T), y^{\varepsilon}(T)} + o(\varepsilon).
\end{split}
\end{equation}
Now we are in position to conclude the proof of the SMP.

\begin{proof}[Proof of Theorem \ref{t.SMP}]
Using the results obtained in the previous section, we are in position to conclude the proof of the theorem as in \cite{YZ} for the classical setting. For the sake of completeness we give an outline of it. Using It\^{o}'s formula to compute $d\braket{p(t),y^{\varepsilon}(t)}$ and $d\braket{p(t),z^{\varepsilon}(t)}$ it is easy to derive the following equalities

\begin{equation}\label{eq.prodotto_py}
\E\braket{p(T),y^{\varepsilon}(T)} = \E\int_0^T \bigl[ \braket{D_xf(t),y^{\varepsilon}(t)} + \opn{Tr}(q(t)^T \delta \sigma(t))\chi_{E_{\varepsilon}}(t) \bigr]dt;
\end{equation}

\begin{equation}\label{eq.prodotto_pz}
\begin{split}
\E\braket{p(T),z^{\varepsilon}(T)} &= \E\int_0^T \bigl[ \braket{D_xf(t),z^{\varepsilon}(t)} + \dfrac{1}{2}\big( \braket{p(t),D_x^2b(t)y^{\varepsilon}(t)^2}\\
&+ \sum_{j=1}^d\braket{q_j(t), D_x^2\sigma^j(t)y^{\varepsilon}(t)^2}\bigr)\bigr]dt \\
&+ \E \int_0^T \bigl[ \braket{p(t),\delta b(t)}  + \sum_{j=1}^d\braket{q_j(t), \delta D_x\sigma^j(t)y^{\varepsilon}(t)} \bigr]\chi_{E_{\varepsilon}}(t)dt
\end{split}
\end{equation}
hence, recalling that $p(T) = -D_xh(\bar{x}(T))$ and adding \eqref{eq.prodotto_py} and \eqref{eq.prodotto_pz}, we get
\begin{equation*}
\begin{split}
-\E \braket{D_x h(\bar{x}(T)),y^{\varepsilon}(T) + z^{\varepsilon}(T)} 
&= \E \int_0^T  \bigl[\braket{D_xf(t),y^{\varepsilon}(t) + z^{\varepsilon}(t)} + \dfrac{1}{2}\braket{p(t),D_x^2b(t)y^{\varepsilon}(t)^2}\bigr]dt\\
&+ \dfrac{1}{2}\sum_{j=1}^d\E \int_0^T  \braket{q_j(t),D_x^2\sigma^j(t)y^{\varepsilon}(t)^2}dt \\
&+ \bigl[ \braket{p(t),\delta b(t)} + \opn{Tr}(q(t)^T\delta \sigma(t))\chi_{E_{\varepsilon}}(t)\bigr]dt + o(\varepsilon).
\end{split}
\end{equation*}
Thanks to the optimality of $\bar{u}(t)$, substituting the above term in the expression of the cost given in Proposition \ref{p.cost_expansion}, we have
\begin{equation*}
\begin{split}
0 &\geq J(\bar{u}) - J(u^{\varepsilon}) \\
&= -\dfrac{1}{2}\E\braket{D_x^2h(T)y^{\varepsilon}(T), y^{\varepsilon}(T)} + \dfrac{1}{2}\E\int_0^T\bigl[ - \braket{D_x^2f(t)y^{\varepsilon}(t), y^{\varepsilon}(t)}  + \braket{p(t),D_x^2b(t)y^{\varepsilon}(t)^2}\bigr]dt\\
&+\dfrac{1}{2}\sum_{j=1}^d\E \int_0^T\braket{q_j(t),D_x^2\sigma^j(t)y^{\varepsilon}(t)^2}dt + \E\int_0^T \bigl[ -\delta f(t) + \braket{p(t),\delta b(t)} \bigr]\chi_{E_{\varepsilon}}(t)dt \\
&+ \sum_{j=1}^d\E\int_0^T \braket{q_j(t),\delta \sigma^j(t)}\chi_{E_{\varepsilon}}(t) + o(\varepsilon).
\end{split}
\end{equation*}
Introducing another matrix valued process $Y^{\varepsilon}(t):= y^{\varepsilon}(t)y^{\varepsilon}(t)^T$ in order to get rid of the second order terms in $y^{\varepsilon}(t)$, we get

\begin{equation*}
\begin{split}
0 &\geq J(\bar{u}) - J(u^{\varepsilon}) \\
&= \dfrac{1}{2}\E\opn{Tr}\bigl( P(T)Y^{\varepsilon}(T) \bigr) + \E\int_0^T \bigl[ \dfrac{1}{2}\opn{Tr}(D_x^2H(t)Y^{\varepsilon}(t)) + \delta H(t)\chi_{E_{\varepsilon}}(t) \bigr]dt + o(\varepsilon),
\end{split}
\end{equation*}
where $D_x^2H(t) := D_x^2H(t,\bar{x}(t),\bar{u}(t),p(t),q(t))$. Then if we use the duality relation of \cite{YZ}, Lemma 4.6, page 137, in the form 
\begin{equation}
\E \opn{Tr}(P(T)Y(T)) = \E \int_0^T \opn{Tr}\bigl[\delta \sigma(t)^TP(t)\delta\sigma(t)\chi_{E_{\varepsilon}}(t) - D_x^2H(t)Y^{\varepsilon}(t) \bigr]dt + o(\varepsilon)
\end{equation}
we can eventually get the following
\begin{equation}\label{eq.quasi_result}
o(\varepsilon) \geq \E \int_0^T \bigl[ \delta H(t) + \dfrac{1}{2}\opn{Tr}(\delta\sigma(t)^TP(t)\delta\sigma(t))\bigr]\chi_{E_{\varepsilon}}(t)dt
\end{equation}

Finally, from the above expression \eqref{eq.quasi_result} we obtain that 
\begin{equation*}
H(t,\bar{x}(t),u(t),p(t),q(t)) - H(t,\bar{x}(t),\bar{u}(t),p(t),q(t)) + \dfrac{1}{2}\opn{Tr}(\delta\sigma(t)^TP(t)\delta\sigma(t)) \leq 0,
\end{equation*}
$\forall u \in U$, a.e. $t \in [0,T]$, $\mP$-a.s.. If we rewrite it in term of $\mathcal{H}$ we get the result.
\end{proof}

\section{The Convex Case}
As we mentioned at the beginning of Section 2, here we are going to discuss the case where controls take values in a closed convex subset $U$ of $\R^n$. In the following we are going to obtain a version of the SMP using the convexity of $U$ and later to derive a sufficient condition of optimality.\\ \\
\remark In this section the maps $D_x^2b, D_x^2\sigma, D_x^2f, D_x^2h$ are no longer used. So, from now on, when we refer to hypothesis (H2)-(H5) we will assume that all the maps involved are only $C^1$ with respect to $x$, in contrast with the previous sections. It is even worth noting that we still have a polynomial growth condition on the first derivative.

\subsection{Necessary conditions} 
The convexity assumption allows us to use a perturbation argument instead of a spike variation technique, avoiding the introduction of the second adjoint equation. On the other hand, in order to treat this case we have to make another assumption: 
\newline \\
HYPOTHESIS (H6) \; The control domain $U$ is a convex subset of $\R^n$. If $\varphi = b,\sigma, f$, the maps $u \mapsto \varphi(t,x,u)$ are $C^1(U)$ and their derivatives satisfy a polynomial growth such as
\[\abs{D_u\varphi(t,x,u)} \leq C(1+ \abs{x}^k), \text{ for some } k \in \mathbb{N}.\]

If $\bar{u}(\cdot)$ is an optimal control we will consider $\bar{u}(\cdot) + \theta(u(\cdot)-\bar{u}(\cdot))$, where $u(\cdot)$ is admissible and we set $x_{\theta}(t)$ the trajectory corresponding to the perturbed control. 
The optimality of $\bar{u}(\cdot)$ guaranties that 
\[ J\bigl(\bar{u}(\cdot) + \theta(u(\cdot)-\bar{u}(\cdot))\bigr) \geq J(\bar{u}(\cdot)). \]
We have to prove that $J(\cdot)$, considered as a functional on $L^1_{\mF}(0,T)$, is G\^ateaux differentiable. Then we will write 
\[ \braket{J'(\bar{u}),u(\cdot)-\bar{u}(\cdot)} \geq 0, \qquad \forall \; u(\cdot) \in \mathcal{U}[0,T], \]
and we will deduce a form of the SMP.\\
If we define a new process $y(t)$ as a solution of the stochastic differential equation
\begin{equation}\label{eq.yu}
\begin{sistema}
dy(t) = \left[ D_xb(t,\bar{x}(t),\bar{u}(t))y(t) + D_ub(t,\bar{x}(t),\bar{u}(t))u(t)\right]dt \\
\qquad \quad + \left[ D_x\sigma(t,\bar{x}(t),\bar{u}(t))y(t) + D_u\sigma(t,\bar{x}(t),\bar{u}(t))u(t)\right]dW(t), \\
y(0) = 0,
\end{sistema}
\end{equation}
we can state the following

\begin{lemma}\label{l.gateaux}
The functional $J(\cdot)$ is G\^ateaux differentiable, moreover the derivative has the form
\begin{equation}\label{gateaux}
\dfrac{d}{d\theta}J(\bar{u}(\cdot)+ \theta u(\cdot))\big|_{\theta=0} = \E \left[ \braket{D_xh(T),y(T)} + \xi(T) \right]
\end{equation}
where $\xi$ is the solution to 
\begin{equation*}
\begin{sistema}
\dfrac{d\xi}{dt} = D_xf(t,\bar{x}(t),\bar{u}(t))y(t) + D_uf(t,\bar{x}(t),\bar{u}(t))u(t)\\
\xi(0) = 0,
\end{sistema}
\end{equation*}
\end{lemma}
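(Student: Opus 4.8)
The plan is to show first that the perturbed trajectories $x_\theta$ converge to $\bar x$ as $\theta\to 0$, with a first-order term given by $y$, and then to pass to the limit in the difference quotient of $J$. Write $x_\theta(t)-\bar x(t) = \theta\, y(t) + r_\theta(t)$, where $y$ solves \eqref{eq.yu}. Subtracting the state equations for $x_\theta$ and $\bar x$ and using the fundamental theorem of calculus (as in Lemma \ref{l.expansion}, but only to first order), I would write the equation for $r_\theta$ as a linear SDE
\[
\begin{cases}
dr_\theta(t) = \bigl[ D_xb(t)\, r_\theta(t) + \rho^1_\theta(t)\bigr]dt + \sum_{j=1}^d\bigl[ D_x\sigma^j(t)\, r_\theta(t) + \rho^{2,j}_\theta(t)\bigr]dW^j(t)\\
r_\theta(0)=0,
\end{cases}
\]
with forcing terms $\rho^1_\theta,\rho^{2,j}_\theta$ built out of the increments $D_xb(t,\bar x+\tau(x_\theta-\bar x),\bar u+\theta(u-\bar u)) - D_xb(t,\bar x,\bar u)$ (integrated in $\tau$), similarly for $\sigma$, plus the increments in the control direction $D_ub(t,\dots,\bar u + \tau\theta(u-\bar u)) - D_ub(t,\bar x,\bar u)$. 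Here the dissipativity condition \eqref{eq.diss.var} gives $\braket{D_xb(t)v,v}\le\alpha|v|^2$, which is exactly hypothesis 1 of Lemma \ref{l.eq.lin}, and (H3), Remark 1 give the boundedness of $D_x\sigma^j$ needed for hypothesis 2.

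Next I would estimate the forcing terms. An a priori estimate of the same flavour as Proposition \ref{p.ex_apriori}, applied to $x_\theta-\bar x$, gives $\sup_t\E|x_\theta(t)-\bar x(t)|^{2k} = O(\theta^{2k})$; combined with the polynomial growth in (H4)-(H5), the $L^p$-bound \eqref{Lp_estimate}, and the continuity of the derivatives $D_xb$, $D_x\sigma^j$, $D_ub$, $D_u\sigma^j$ in their arguments, a dominated convergence argument (exactly the contradiction-and-subsequence device used in part (v) of Proposition \ref{p.expansion}) shows that the $L^{2k}(\Omega)$-norms of $\rho^1_\theta(t)/\theta$ and $\rho^{2,j}_\theta(t)/\theta$, integrated in $t$, tend to $0$ as $\theta\to 0$. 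Feeding this into Lemma \ref{l.eq.lin} yields $\sup_t\E|r_\theta(t)|^{2}=o(\theta^2)$, i.e. $(x_\theta-\bar x)/\theta \to y$ in $C([0,T];L^2(\Omega;\R^n))$, and likewise $x_\theta(T)\to\bar x(T)$ and $(x_\theta(T)-\bar x(T))/\theta\to y(T)$ in $L^2(\Omega)$.

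Finally I would differentiate $J$. Writing
\[
\frac{J(\bar u + \theta u) - J(\bar u)}{\theta} = \E\!\int_0^T \frac{f(t,x_\theta(t),\bar u(t)+\theta u(t)) - f(t,\bar x(t),\bar u(t))}{\theta}\,dt + \E\,\frac{h(x_\theta(T)) - h(\bar x(T))}{\theta},
\]
I expand each increment by Lemma \ref{l.expansion} to first order, replace $(x_\theta-\bar x)/\theta$ by $y$ plus a remainder, and use the convergences above together with the polynomial growth of $D_xf$, $D_uf$, $D_xh$ to pass to the limit under the expectation and the time integral (uniform integrability coming from \eqref{Lp_estimate} and Hölder). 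This produces
\[
\frac{d}{d\theta}J(\bar u+\theta u)\big|_{\theta=0} = \E\!\int_0^T\bigl[\braket{D_xf(t),y(t)} + \braket{D_uf(t),u(t)}\bigr]dt + \E\braket{D_xh(\bar x(T)),y(T)},
\]
and recognizing the time integral as $\E\,\xi(T)$ for $\xi$ solving the stated ODE gives \eqref{gateaux}. The main obstacle is the justification of the limit passages: controlling the nonlinear remainders uniformly in $\theta$ despite only polynomial (not Lipschitz) growth of $b$, which is precisely where dissipativity \eqref{eq.diss.var} together with Lemma \ref{l.eq.lin} and the moment bound \eqref{Lp_estimate} must be combined carefully, mirroring the argument already carried out in Proposition \ref{p.expansion}.
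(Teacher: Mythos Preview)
Your plan is correct and follows essentially the same route as the paper: define the remainder $r_\theta = x_\theta - \bar x - \theta y$ (the paper works with the rescaled quantity $\tilde x_\theta = r_\theta/\theta$), write its SDE, exploit dissipativity of $D_xb$ together with continuity and polynomial growth of the derivatives to show via dominated convergence that the remainder is $o(\theta)$ in $L^2$, and then pass to the limit in the difference quotients of $h$ and $f$. The only cosmetic difference is that the paper applies It\^{o}'s formula and Gronwall directly to $|\tilde x_\theta|^2$, whereas you package that same step through Lemma~\ref{l.eq.lin}; just be aware that the linear coefficient acting on $r_\theta$ is really the integral average $\int_0^1 D_xb(t,\bar x + \tau(x_\theta-\bar x),\bar u+\theta u)\,d\tau$ rather than $D_xb(t)$ itself, but since this average is still $\alpha$-dissipative the lemma applies verbatim.
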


\begin{proof}
We denote $x_{\theta}$ the trajectory corresponding to the perturbed control and set
\[ \tilde{x}_{\theta}(t) = \dfrac{x_{\theta}(t) - x(t)}{\theta} - y(t). \]
The idea of the proof is to show that $\abs{\tilde{x}_{\theta}(t)}_{L^2(\Omega)}^2 \rightarrow 0$ when $\theta \rightarrow 0$. In fact, this is crucial in order to show that
\begin{equation}
\dfrac{1}{\theta}\E \bigl[ h(x_{\theta}(T)) - h(x(T))\bigl] \longrightarrow \E\braket{D_xh(x(T)),y(T)}.
\end{equation}
We start by writing the equation for $\tilde{x}_{\theta}(t)$
\begin{equation}
\begin{split}
d\tilde{x}_{\theta}(t) &= \frac{1}{\theta}\bigl[ b(t,\bar{x}(t) + \theta y(t) + \theta \tilde{x}_{\theta}(t),\bar{u}(t)+\theta u(t))\\
&- b(t,\bar{x}(t),\bar{u}(t)) - \theta D_xb(t) y(t) - \theta D_ub(t) u(t) \bigr]dt \\
&+ \frac{1}{\theta}\bigl[ \sigma(t,\bar{x}(t) + \theta y(t) + \theta \tilde{x}_{\theta}(t),\bar{u}(t)+\theta u(t))\\ 
&- \sigma(t,\bar{x}(t),\bar{u}(t)) - \theta D_x\sigma(t)y(t) - \theta D_u\sigma(t)u(t) \bigr]dW(t)
\end{split}
\end{equation}
with $\tilde{x}_{\theta}(0) = 0 $ as initial condition. Then using the same technique as in the spike variation case we get the following equation 
\begin{equation*}
\begin{split}
d\tilde{x}_{\theta}(t) &= \int_0^1 D_xb\bigl(t,\bar{x}(t) + \lambda\theta( y(t) + \tilde{x}_{\theta}(t)),\bar{u}(t)+ \lambda\theta u(t)\bigr)\tilde{x}_{\theta}(t)\, d\lambda dt\\
&+ \int_0^1 D_x\sigma\bigl(t,\bar{x}(t) + \lambda\theta( y(t) + \tilde{x}_{\theta}(t)),\bar{u}(t)+ \lambda\theta u(t)\bigr)\tilde{x}_{\theta}(t)\, d\lambda dW(t)\\
&+ \int_0^1 \bigl[ D_xb\bigl(t,\bar{x}(t) + \lambda\theta( y(t) + \tilde{x}_{\theta}(t)),\bar{u}(t)+ \lambda\theta u(t)\bigr) - D_xb(t)\bigr]y(t)\, d\lambda dt\\
&+ \int_0^1 \bigl[ D_x\sigma\bigl(t,\bar{x}(t) + \lambda\theta( y(t) + \tilde{x}_{\theta}(t)),\bar{u}(t)+ \lambda\theta u(t)\bigr) - D_x\sigma(t)\bigr]y(t)\, d\lambda dW(t)\\
&+ \int_0^1 \bigl[ D_ub\bigl(t,\bar{x}(t) + \lambda\theta( y(t) + \tilde{x}_{\theta}(t)),\bar{u}(t)+ \lambda\theta u(t)\bigr) - D_ub(t)\bigr]u(t)\, d\lambda dt\\
&+ \int_0^1 \bigl[ D_u\sigma\bigl(t,\bar{x}(t) + \lambda\theta( y(t) + \tilde{x}_{\theta}(t)),\bar{u}(t)+ \lambda\theta u(t)\bigr) - D_u\sigma(t)\bigr]u(t)\, d\lambda dW(t).
\end{split}
\end{equation*}
Applying It\^{o} formula to the function $\tilde{x}_{\theta} \mapsto \abs{\tilde{x}_{\theta}}^2$ and taking the expectation we get
\begin{equation*}
\begin{split}
\E |\tilde{x}_{\theta}|^2 &\leq K\E \int^t_0 |\tilde{x}_{\theta}(s)|^{2} ds\\
&+ K\E \int_0^T |y(t)|^2 \int_0^1 |D_xb(t,\bar{x}(t) + \lambda\theta( y(t) + \tilde{x}_{\theta}(t)),\bar{u}(t)+ \lambda\theta u(t)) - D_xb(t)|^2 d\lambda dt \\
&+ K\E \int_0^T |u(t)|^2 \int_0^1 |D_ub(t,\bar{x}(t) + \lambda\theta( y(t) + \tilde{x}_{\theta}(t)),\bar{u}(t)+ \lambda\theta u(t)) - D_ub(t)|^2 d\lambda dt\\
&+ \E \int_0^T |y(t)|^2\int_0^1 \bigl[ D_x\sigma(t,\bar{x}(t) + \lambda\theta( y(t) + \tilde{x}_{\theta}(t)),\bar{u}(t)+ \lambda\theta u(t)) - D_x\sigma(t)\bigr]^2 d\lambda dt\\
&+ \E\int_0^T |u(t)|^2\int_0^1 \bigl[ D_u\sigma(t,\bar{x}(t) + \lambda\theta( y(t) + \tilde{x}_{\theta}(t)),\bar{u}(t)+ \lambda\theta u(t)) - D_u\sigma(t)\bigr]^2 d\lambda dt \\
&= K\E \int^t_0 |\tilde{x}_{\theta}(s)|^{2} ds + \rho_{\theta},
\end{split}
\end{equation*}
thanks to the polynomial growth of $D_xb, D_x\sigma, D_ub, D_u\sigma$ and the Young inequality. Now, let us estimate the second term of the right hand side of the above inequality. If $\theta \rightarrow 0$ then also
\[ \E\int_0^1 |D_xb\bigl(t,\bar{x}(t) + \lambda\theta( y(t) + \tilde{x}_{\theta}(t)),\bar{u}(t)+ \lambda\theta u(t)\bigr) - D_xb(t)|^2 d\lambda \longrightarrow 0, \]
due to the polynomial growth and the continuity of $D_xb$ with respect to $(x,u)$. For the remaining terms the same argument applies, so we can conclude that if $\theta \rightarrow 0$ also  $\rho_{\theta} \rightarrow 0$.
Finally applying Gronwall inequality we get
\[ \E |\tilde{x}_{\theta}(t)|^2 \leq K\rho_{\theta} \longrightarrow 0 \qquad \text{ if } \theta \rightarrow 0 \]
Then, in order to prove formula \eqref{gateaux} one has to compute the following
\begin{itemize}
\item[(i)] $\E\dfrac{1}{\theta} [h(x_{\theta}(T)) - h(x(T))] \longrightarrow \E\braket{D_xh(x(T)),y(T)}$
\item[(ii)] $\E \dfrac{1}{\theta} \int_0^T \bigr[ f(t,x_{\theta},\bar{u} + \theta u) - f(t,\bar{x},\bar{u}) \bigl]dt \longrightarrow \E\xi(T).$
\end{itemize}
but $(i)$ can be rewritten in the form
\begin{equation}
\begin{split}
&\E \int_0^1 D_xh(\bar{x}(T) + \lambda (x_{\theta}(T) - \bar{x}(T)))(\tilde{x_{\theta}}(T) + y(T))d\lambda \\
&\leq \int_0^1 \E \bigl(\abs{D_xh(\bar{x}(T) + \lambda(x_{\theta}(T) - \bar{x}(T)))}^2\bigr)^{\frac{1}{2}}\bigl(\E\abs{\tilde{x}_{\theta}(T)}^2\bigr)^{\frac{1}{2}}d\lambda \\
&+ \E \int_0^1 D_xh(\bar{x}(T) + \lambda (x_{\theta}(T) - \bar{x}(T)))y(T)d\lambda  
\end{split}
\end{equation}
where we used the H\"{o}lder inequality. Passing to the limit with $\theta \rightarrow 0$ we can conclude. Regarding $(ii)$, the result follows in a similar way. 
\end{proof}

\noindent Now we can state the maximum principle also in this particular case, where controls assume their values in a convex subset.

\begin{thm}
Suppose $(H2)-(H6)$ hold and let $(\bar{x}(\cdot),\bar{u}(\cdot))$ be an optimal pair for the control problem \eqref{PROBLEMA}. Then there exist $p, q_j \in L^2_{\mathcal{F}}(0,T;\R^n)$ which are a solution of the BSDE \eqref{eq.adjoint_first}, such that
\[ \braket{\dfrac{\partial H}{\partial u}(t,\bar{x}(t), \bar{u}(t),p(t)),u - \bar{u}(t)} \leq 0 \qquad \qquad d\mP \times dt \; q.c. , \; u \in U\]
where $H$ is the Hamiltonian \eqref{hamiltonian}. 
\end{thm}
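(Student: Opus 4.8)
The plan is to combine the G\^ateaux differentiability obtained in Lemma~\ref{l.gateaux} with a duality computation against the first adjoint equation \eqref{eq.adjoint_first}, and then to pass from an integrated variational inequality to the pointwise one by a localization argument, along the lines of the classical treatment in \cite{YZ}.

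First I would fix an admissible $u(\cdot)\in\U[0,T]$ and set $v(\cdot):=u(\cdot)-\bar{u}(\cdot)$. Since $U$ is convex by (H6), $\bar{u}(\cdot)+\theta v(\cdot)$ is admissible for every $\theta\in[0,1]$, so optimality of $\bar{u}$ gives $\frac{d}{d\theta}J(\bar{u}+\theta v)\big|_{\theta=0}\ge 0$. By Lemma~\ref{l.gateaux}, noting that $\xi(T)=\int_0^T\bigl[\braket{D_xf(t),y(t)}+\braket{D_uf(t),v(t)}\bigr]dt$, this reads
\[
\E\braket{D_xh(\bar{x}(T)),y(T)}+\E\int_0^T\bigl[\braket{D_xf(t),y(t)}+\braket{D_uf(t),v(t)}\bigr]dt\ \ge\ 0,
\]
where $y(\cdot)$ solves the linearized equation \eqref{eq.yu} with $u$ replaced by $v$.

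Next I would apply It\^o's formula to $d\braket{p(t),y(t)}$, with $(p,q)$ the solution of \eqref{eq.adjoint_first} from Theorem~\ref{t.BSDE}. In the drift the term $\braket{p(t),D_xb(t)y(t)}$ coming from $dy$ cancels $\braket{D_xb(t)^Tp(t),y(t)}$ coming from $dp$, and the It\^o cross-variation term $\sum_j\braket{q_j(t),D_x\sigma^j(t)y(t)}$ cancels $\sum_j\braket{D_x\sigma^j(t)^Tq_j(t),y(t)}$ from $dp$. Using $y(0)=0$, $p(T)=-D_xh(\bar{x}(T))$, and that the stochastic integrals have zero expectation (by the usual localization together with the moment bounds of Proposition~\ref{p.ex_apriori} and the integrability already used in the proof of Lemma~\ref{l.gateaux}), I obtain
\[
\E\braket{D_xh(\bar{x}(T)),y(T)}+\E\int_0^T\braket{D_xf(t),y(t)}dt=-\E\int_0^T\Bigl[\braket{p(t),D_ub(t)v(t)}+\sum_{j=1}^d\braket{q_j(t),D_u\sigma^j(t)v(t)}\Bigr]dt.
\]

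Substituting this into the optimality inequality and recalling the definition \eqref{hamiltonian} of $H$ — so that $\braket{p(t),D_ub(t)v(t)}+\sum_j\braket{q_j(t),D_u\sigma^j(t)v(t)}-\braket{D_uf(t),v(t)}=\braket{D_uH(t,\bar{x}(t),\bar{u}(t),p(t),q(t)),v(t)}$ — I arrive at
\[
\E\int_0^T\braket{D_uH(t,\bar{x}(t),\bar{u}(t),p(t),q(t)),\,u(t)-\bar{u}(t)}\,dt\ \le\ 0\qquad\text{for all }u(\cdot)\in\U[0,T].
\]
Finally I would localize: let $\{u_k\}_{k\ge1}$ be dense in $U$ (possible by separability, (H1)), set $E_k:=\{(t,\omega):\braket{D_uH(t,\bar{x}(t),\bar{u}(t),p(t),q(t)),u_k-\bar{u}(t)}>0\}$, which is progressively measurable, and test the displayed inequality against the admissible control $u_k\chi_{E_k}+\bar{u}\chi_{E_k^c}$; the integrand becomes nonnegative, hence $E_k$ is $d\mP\times dt$-null. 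Intersecting the complements over $k$ and invoking density of $\{u_k\}$ and continuity of $u\mapsto\braket{D_uH(t,\bar{x}(t),\bar{u}(t),p(t),q(t)),u-\bar{u}(t)}$ yields $\braket{D_uH(t,\bar{x}(t),\bar{u}(t),p(t),q(t)),u-\bar{u}(t)}\le0$ for all $u\in U$, $d\mP\times dt$-a.s., which is the claim. The step I expect to be the main obstacle is the integrability bookkeeping in the duality computation — ensuring all pairings lie in $L^1$ and the martingale parts genuinely vanish, given that $D_ub$ only has polynomial growth in $x$ by (H6); the algebraic cancellations and the localization are routine, and the G\^ateaux derivative is already provided by Lemma~\ref{l.gateaux}.
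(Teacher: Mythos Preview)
Your argument is correct and follows essentially the same route as the paper: both compute $\E\,d\braket{p(t),y(t)}$ via It\^o's formula to produce the duality identity, insert it into the G\^ateaux derivative from Lemma~\ref{l.gateaux}, and arrive at the integrated inequality $\E\int_0^T\braket{D_uH,\,u-\bar u}\,dt\le 0$ for all admissible $u(\cdot)$. The only difference is that you spell out the passage from the integrated inequality to the pointwise one via a density/localization argument, whereas the paper simply stops at the integrated form and declares the proof finished; your reference to (H1) for separability is unnecessary here since under (H6) one has $U\subset\R^n$, which is automatically separable.
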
 

\begin{proof}
The existence  and uniqueness of a solution to the BSDE \eqref{eq.adjoint_first} is guaranteed, due to Theorem \ref{t.BSDE}. Moreover, thanks to \eqref{eq.yu} and Lemma \ref{l.gateaux} we are able to compute 
\begin{equation}
\E \left[ d\braket{p(t),y(t)}\right] = \E \bigl[ \braket{D_xf(t),y(t)} + \braket{p(t), D_ub(t)u(t)} + \sum_{j=1}^d\braket{q_j(t),D_u\sigma^j(t)u(t)} \bigr]dt
\end{equation}
and we know that
\begin{equation*}
\begin{split}
-\E &\braket{D_xh(\bar{x}(T)),y(T)} = \E \braket{p(T),y(T)} - \E \braket{p(0),y(0)} \\
&= \E \int_0^T  \bigl[\braket{D_xf(t),y(t)} + \braket{p(t), D_ub(t)u(t)} + \sum_{j=1}^d\braket{q_j(t),D_u\sigma^j(t)u(t)} \bigr]dt.
\end{split}
\end{equation*}
Hence, from Lemma \ref{l.gateaux}
\begin{equation*}
\begin{split}
0 &\leq \dfrac{d}{d\theta} J(\bar{u} + \theta u)|_{\theta =0} = \E \int_0^T \left[\braket{D_uf(t),u(t)} - \braket{p(t),D_ub(t)^T u(t)} - \braket{q(t),D_u\sigma(t) u(t)}\right]dt\\
&= \E \int_0^T \langle\dfrac{\partial}{\partial u} \bigl[f(t,\bar{x}(t),\bar{u}(t)) - p(t)\cdot b(t,\bar{x}(t),\bar{u}(t)) - \opn{Tr}(q(t)\sigma^T(t,\bar{x}(t),\bar{u}(t)))\bigr],u(t)\rangle dt \\
&= -\E \int_0^T \braket{\dfrac{\partial H}{\partial u}(t,\bar{x}(t),\bar{u}(t),p(t),q(t)) ,u(t)}dt,
\end{split}
\end{equation*}
for all  $u(\cdot) \in \mathcal{U}[0,T]$, and we have finished.
\end{proof}  

\begin{ex} 
{\rm Even in this setting we can consider drift terms of polynomial type as in Example \ref{ex1}.}
\end{ex}

\subsection{Sufficient conditions}

Here we want to remark that also in our framework it is possible to derive a sufficient condition of optimality of a pair $(\bar{x},\bar{u})$. In particular, unlike the previous paragraph it is not necessary to ask for the differentiability of coefficients with respect to the control. Indeed only a locally Lipschitz assumption is needed along with some simple properties of Clarke's generalized gradient.\\
\newline
HYPOTHESIS (H7) \; The control domain $U$ is a convex subset of $\R^n$. If $\phi = b, \sigma, f$, the maps $u \mapsto \phi(t,x,u)$ are locally Lipschitz in $u$ and their derivatives with respect to $x$, i.e. $D_x\phi(t,x,u)$, are continuous in $(x,u)$.

\begin{thm}
Let hypotheses (H2)-(H5) and (H7) hold. Let $(\bar{x}(\cdot)),\bar{u}(\cdot)$ be an admissible pair, $(p(\cdot),q(\cdot))$ and $(P(\cdot),Q(\cdot))$ be solutions to \eqref{eq.adjoint_first}, \eqref{eq.adjoint_second}, respectively. If the following assumptions hold
\begin{itemize}
\item[(i)] $h(\cdot)$ is a convex function;
\item[(ii)] the Hamiltonian $H(t,\cdot,\cdot,p(t),q(t))$ is concave for all $t \in [0,T]$, $\mP$-a.s.;
\item[(iii)] $\mathcal{H}(t,\bar{x}(t), \bar{u}(t)) = \max_{u\in U}\mathcal{H}(t,\bar{x}(t),u)$, a.e. $ t \in [0,T]$, $\mP$-a.s..
\end{itemize}  
Then $(\bar{x}(\cdot),\bar{u}(\cdot))$ is an optimal pair of the problem \eqref{PROBLEMA}. 
\end{thm}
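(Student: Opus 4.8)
The plan is to show directly that $J(u(\cdot)) \ge J(\bar u(\cdot))$ for every admissible $u(\cdot)$, adapting the classical duality computation (cf.\ \cite{YZ}) to the present setting. Let $x(\cdot)$ be the trajectory driven by $u(\cdot)$ and put $\tilde x := x - \bar x$; all the quantities that follow are integrable by the polynomial growth in (H2)-(H5) and the moment bound \eqref{Lp_estimate}. Starting from $J(u(\cdot)) - J(\bar u(\cdot)) = \E[h(x(T)) - h(\bar x(T))] + \E\int_0^T [f(t,x(t),u(t)) - f(t,\bar x(t),\bar u(t))]\,dt$, the convexity of $h$ gives $h(x(T)) - h(\bar x(T)) \ge \braket{D_xh(\bar x(T)),\tilde x(T)} = -\braket{p(T),\tilde x(T)}$, and It\^o's formula applied to $\braket{p(t),\tilde x(t)}$ (here, since the spike variation is not involved, $\tilde x$ itself plays the role that the first variation process played in Section~5 and only $(p,q)$ enters the computation; $(P,Q)$ intervenes solely through the definition of $\mathcal{H}$ in assumption (iii), $\mathcal{H}$ being as in Theorem~\ref{t.SMP}) yields, after taking expectations and rearranging,
\[ J(u(\cdot)) - J(\bar u(\cdot)) \ge \E\int_0^T \Bigl[ \braket{D_xH(t,\bar x(t),\bar u(t),p(t),q(t)),\tilde x(t)} - \bigl(H(t,x(t),u(t),p(t),q(t)) - H(t,\bar x(t),\bar u(t),p(t),q(t))\bigr) \Bigr]\,dt. \]
Hence it suffices to prove that $H(t,x(t),u(t),p(t),q(t)) - H(t,\bar x(t),\bar u(t),p(t),q(t)) - \braket{D_xH(t,\bar x(t),\bar u(t),p(t),q(t)),\tilde x(t)} \le 0$ for $d\mP\times dt$-a.e.\ $(t,\omega)$.

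To bound this quantity I would use assumption (ii). Since $H(t,\cdot,\cdot,p(t),q(t))$ is concave and, by (H7), of class $C^1$ in $x$ with $D_xH$ jointly continuous in $(x,u)$, its one-sided directional derivative at $(\bar x(t),\bar u(t))$ in a direction $(d_x,d_u)$ equals $\braket{D_xH(t,\bar x(t),\bar u(t),p(t),q(t)),d_x} + H'_u(d_u)$, where $H'_u(\cdot)$ denotes the partial one-sided directional derivative in $u$ (which exists by concavity); concavity also gives $H(t,x(t),u(t),p(t),q(t)) - H(t,\bar x(t),\bar u(t),p(t),q(t)) - \braket{D_xH(t,\bar x(t),\bar u(t),p(t),q(t)),\tilde x(t)} \le H'_u(u(t)-\bar u(t))$. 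Thus the matter reduces to showing $H'_u(u(t)-\bar u(t)) \le 0$ a.e.

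This final step is the delicate one and is where (iii) and a simple property of Clarke's generalized gradient come in. Write $\mathcal{H}(t,\bar x(t),v) = H(t,\bar x(t),v,p(t),q(t)) + c(t) + g_t(v)$, where $c(t)$ does not depend on $v$ and $g_t(v) = \tfrac12\opn{Tr}\bigl[(\sigma(t,\bar x(t),v) - \sigma(t,\bar x(t),\bar u(t)))^T P(t)(\sigma(t,\bar x(t),v) - \sigma(t,\bar x(t),\bar u(t)))\bigr]$. By the local Lipschitz continuity of $\sigma(t,\bar x(t),\cdot)$ from (H7), $g_t(\bar u(t)) = 0$ and $g_t(v) = O(|v - \bar u(t)|^2)$ as $v\to\bar u(t)$, so $g_t$ has zero Clarke $u$-gradient at $\bar u(t)$ and $\mathcal{H}(t,\bar x(t),\cdot)$ and $H(t,\bar x(t),\cdot,p(t),q(t))$ share the same Clarke $u$-gradient there. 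Since $\bar u(t)$ maximizes $v\mapsto\mathcal{H}(t,\bar x(t),v)$ over the convex set $U$, testing along the segment $v = \bar u(t) + s(u(t)-\bar u(t)) \in U$, $s\in[0,1]$, dividing by $s$ and letting $s\downarrow 0$ — using $g_t(\bar u(t)+s(u(t)-\bar u(t)))/s\to 0$ and the concavity of $H$ in $u$ — gives $H'_u(u(t)-\bar u(t)) \le 0$. Combining, the integrand above is $\le 0$ a.e., so $J(u(\cdot)) - J(\bar u(\cdot)) \ge 0$; $u(\cdot)$ being arbitrary, $(\bar x(\cdot),\bar u(\cdot))$ is optimal.

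I expect the main obstacle to be precisely this transfer of the pointwise maximality of $\mathcal{H}$ — which carries both the second adjoint process $P$ and the $u$-dependence of $\sigma$ — into a clean variational inequality for $H$, under the mere local Lipschitz regularity of the coefficients in $u$. What makes it work is that the $P$-quadratic term distinguishing $\mathcal{H}$ from $H$ is of second order at $\bar u(t)$ and hence invisible to the relevant Clarke/directional derivative, and that the directional derivative of $H$ separates into its $x$- and $u$-components because $H$ is $C^1$ in $x$ with a jointly continuous $x$-derivative. The remaining ingredients — the localization in the It\^o computation and the measurability/integrability bookkeeping — are routine given \eqref{Lp_estimate} and the growth hypotheses.
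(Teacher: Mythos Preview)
Your argument is correct and follows essentially the same route as the paper, which simply refers to \cite{YZ}: the key step in both is that the $P$-quadratic term $g_t(v)=\tfrac12\opn{Tr}\bigl[(\sigma(t,\bar x(t),v)-\sigma(t,\bar x(t),\bar u(t)))^TP(t)(\sigma(t,\bar x(t),v)-\sigma(t,\bar x(t),\bar u(t)))\bigr]$ is $O(|v-\bar u(t)|^2)$ by the local Lipschitz continuity in (H7), so that $\partial_u H(t,\bar x(t),\bar u(t),p(t),q(t))=\partial_u\mathcal{H}(t,\bar x(t),\bar u(t))$ and the maximality of $\mathcal{H}$ transfers to the needed variational inequality for $H$. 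Your use of one-sided directional derivatives (available by concavity) together with the splitting of the derivative via the joint continuity of $D_xH$ is a clean way to phrase this, and the remaining duality computation with $(p,q)$ and the convexity of $h$ is exactly the standard one.
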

\begin{proof}
The key fact of the proof (see \cite{YZ}, Lemma 5.1, page 138) is to show that 
\[ \partial_u H(t,\bar{x}(t),\bar{u}(t),p(t),q(t)) = \partial_u \mathcal{H}(t,\bar{x}(t),\bar{u}(t)) \]
where $\partial_uH$ is the Clarke's generalized gradient of the Hamiltonian. Then the proof proceed exactly as in \cite{YZ} (page 139-140) noting that the first adjoint equation \eqref{eq.adjoint_first} is well posed.
\end{proof}
 
\ack The author would like to thank Professor Marco Fuhrman for helpful discussions and suggestions related to this work.

\end{document}